\documentclass{article}
\usepackage{graphicx} 
\usepackage[margin=1in]{geometry} 
\usepackage{amsmath,amsthm,amssymb,amscd}
\usepackage[alphabetic]{amsrefs}

\newcommand{\mf}[1]{\mathfrak{#1}}
\newcommand{\ZZ}{\mathbb{Z}}
\newcommand{\CC}{\mathbb{C}}

\DeclareMathOperator{\HF}{HF}
\DeclareMathOperator{\supp}{supp}
\DeclareMathOperator{\coker}{coker}
\DeclareMathOperator{\Ext}{Ext}
\DeclareMathOperator{\Tor}{Tor}
\DeclareMathOperator{\init}{in}
\DeclareMathOperator{\lcm}{lcm}
\DeclareMathOperator{\Ind}{Ind}

\DeclareMathOperator{\Sym}{Sym}
\DeclareMathOperator{\Span}{span}

\theoremstyle{definition}
\newtheorem{theorem}{Theorem}[section]
\newtheorem{proposition}[theorem]{Proposition}
\newtheorem{example}[theorem]{Example}

\title{Equivariant Syzygies of the Ideal of $2 \times 2$ Permanents of a $2 \times n$ Matrix}
\author{Jacob Zoromski}
\date{}

\begin{document}

\maketitle

\abstract{We describe the equivariant syzygies of the ideal of $2 \times 2$ permanents of a generic $2 \times n$ matrix under its natural symmetric and torus group actions. Our proof gives us a new method of finding the Betti numbers of this ideal, which were first described by Gesmundo, Huang, Schenck, and Weyman.}

\section{Introduction}

Let $V,W$ be complex vector spaces with bases $ \{ v_1,\dots,v_n \}$ and $\{w_1,\dots,w_m\}$, respectively. Let $S$ be the polynomial ring $\Sym(V \otimes W) \cong \CC[x_{11},\dots,x_{mn}]$, and $M$ be the $m \times n$ matrix such that $M_{ij} = x_{ij}$. Two natural ideals arising from $M$ are its ideal of subdeterminants and its ideal of subpermanents. Minimal free resolutions of the former were determined by Lascoux \cite{MR520233}, while resolutions of the latter remain mysterious. Our goal is to better understand the syzygies of the ideal of subpermanents.
\par
Recent progress was made in \cite{permanents}, where the authors find the graded Betti numbers of the ideal $P$ of the $2 \times 2$ subpermanents of $M$ when $m = 2$. Letting $x_{1i} = x_i$ and $x_{2j} = y_j$, $P$ can be written as
\[
P = (x_i y_j + x_j y_i : 1 \leq i < j \leq n) \subset \mathbb{C}[x_1,\dots,x_n,y_1,\dots,y_n]
\]
As a direction for future research, they proposed describing the syzygies of $P$ as representations over the symmetric groups that permute the rows and columns of $M$. The main theorem of our paper is a description of the syzygies of $P$ in terms of representations of a larger group which keeps track of multidegrees and can be described as follows.
\par
The symmetric group $\mf S_n \times \mf S_m$ acts on $V \otimes W$ by $(\sigma,\tau)(v_i \otimes w_j) = v_{\sigma(i)} \otimes w_{\tau(j)}$. We can also give $V \otimes W$ a $\ZZ^m \times \ZZ^n$ multigrading by letting $v_i, w_j$ have multidegrees $-\mathbf{e}_i, -\mathbf{f}_j$, where $\mathbf{e}_i,\mathbf{f}_j$ are standard basis elements of $\ZZ^m,\ZZ^n$, respectively. This corresponds to a $G_n \times G_m := (\mf S_n \ltimes (\CC^*)^n) \times (\mf S_m \ltimes (\CC^*)^m)$-action on $V \otimes W$, and hence on $S$. Our main theorem describes $\Ext^\bullet_S(P,\CC)$ in terms of $G_n \times G_2$-representations. For simplicity, we will only consider the $G_n$-action for the remainder of the introduction. To see the full $G_n \times G_2$ version of the main theorem, see Theorem \ref{P} of Section 3.
\par
The irreducible representations of $\mf S_n$ are the Specht modules, which are parametrized by partitions $\lambda = (\ell_1 \geq \ell_2 \geq \dots \geq \ell_q)$ with $|\lambda| =  \ell_1 + \dots + \ell_q = n$ and the $\ell_i$ positive integers. We will denote such representations by $[\lambda]$ (for background on complex representations of $\mf S_n$, see \cite[\S 4]{MR1153249}). Among the simplest representations to describe are the trivial representation $[n]$, the sign representation $[1^n]$, and, more generally, the Specht modules corresponding to hooks $[a,1^{n-a}]$ (here, $d^e$ means $d$ occurs $e$ times). These can be realized as exterior powers of the ``standard" representation $[n-1,1]$, so-called because the natural action of permuting the basis elements of an $n$-dimensional vector space gives the representation $[n] + [n-1,1]$. The representations occurring in our main theorem can be constructed entirely from these representations. 
\par
We will use $\Ind([\lambda_1],\dots,[\lambda_i])$ to denote the induced representation of $[\lambda_1] \boxtimes \dots \boxtimes [\lambda_i]$ from $\mf S_{|\lambda_1|} \times \dots \times \mf S_{|\lambda_i|}$ to $\mf S_{|\lambda_1| + \dots + |\lambda_i|}$. To recover the irreducible $\mf S_{|\lambda_1| + \dots + |\lambda_i|}$-representations from the induced representation, we use the Littlewood-Richardson rule, or the simpler Pieri's rule when, as in our main theorem, we induce up from trivial and sign representations (see \cite[\S A.1]{MR1153249} for background).
\par
 Representations of $G_n$ in the $\mf S_n$-orbit of a fixed multidegree $\mathbf{a}$ can be considered as induced representations from the $\mf S_n$-stabilizer of $\mathbf{a}$, which is a Young subgroup $\mf S_{p_1} \times \dots \times \mf S_{p_\ell}$ of $\mf S_n$. If $\mf S_{p_1} \times \dots \times \mf S_{p_\ell}$ acts on a vector space of multidegree $\mathbf a$ with representation $\rho_1 \boxtimes \dots \boxtimes \rho_\ell$, the $G_n$-representation corresponding to the $\mf S_n$-orbits of $\mathbf{a}$ is the induced representation 
\[
\Ind_{(\mf S_{p_1} \times \dots \times \mf S_{p_\ell}) \ltimes (\CC^*)^n}^{G_n}((\rho_1 \boxtimes \dots \boxtimes \rho_\ell) \otimes \chi_{\mathbf{a}})
\]
where $\chi_{\mathbf{a}}$ is the irreducible representation corresponding to multidegree $\mathbf{a}$. For simplicity, we will mostly consider $\mathbf{a}$ that are partitions, i.e. $\mathbf{a} = (a_1,\dots,a_n)$ with $a_1 \geq \dots \geq a_n$ and $a_i \geq 0$ for all $i$. When $\mathbf{a}$ is a partition, it can be rewritten as $(d_1^{p_1},\dots,d_\ell^{p_\ell})$ with $d_1 > \dots > d_\ell$. Abusing notation, we will write the above representation as 
\[
(\rho_1, \dots, \rho_\ell)_{\langle d_1^{p_1},\dots,d_\ell^{p_\ell}\rangle} \text{ or } (\rho_1, \dots, \rho_\ell)_{\langle \mathbf{a} \rangle}
\]
For a fixed $G_n$-invariant vector space $B$, the $G_n$-representation corresponding to the $\mf S_n$-orbits of $\mathbf{a}$ will be denoted by $B_{\langle \mathbf{a} \rangle}$. With these notations, we can state our main theorem:

\begin{theorem}
\label{Gn version}
The minimal free resolution of $P$ consists of three linear strands. The multidegrees $\mathbf{a}$ for which $\Ext_S^\bullet(P,\CC)_{\mathbf{a}}$ is non-zero are permutations of $(2^a,1^b,0^{n-a-b})$ for various $a$ and $b$. The $\mf S_n$-representations appearing in each strand can be determined from induced representations of trivial representations $[j]$, sign representations $[1^i]$, and the Specht module corresponding to the hook $[k-2,1^2]$ for various $i,j,$ and $k$. The following are isomorphisms of $G_n$-representations for each linear strand: 
\begin{itemize}
\item[First Strand:]
\[
\Ext^0(P,\CC) \cong ([2],[n-2])_{\langle 1^2,0^{n-2} \rangle}
\]
\item[Second Strand:] For $a,b$ with $2a + b = p+3, a \geq 1, b \geq 2$, and  $a + b \leq n$,
\[
\Ext^p(P,\CC)_{\langle 2^a,1^b,0^{n-a-b} \rangle} \cong
    ([a],\sum_{c = 0}^{b-2} \Ind([1^2],[1^{b-2-c}],[1^{c}]),[n-a-b])_{\langle 2^a,1^b,0^{n-a-b} \rangle} 
\]
\[
\Ext^p(P,\CC)_{\langle 1^{p+3},0^{n-p-3} \rangle} \cong
  (\sum_{c = 0}^{p+1}(\Ind([1^2],[1^{p+1-c}],[1^c]) - \Ind([1^{p+2-c}],[1^{c+1}]),[n-p-3])_{\langle 1^{p+3}, 0^{n-p-3} \rangle} \\
\]

\item[Third Strand:]
For $a,b$ with $2a + b = p+4, a \geq 3, b \geq 0,$ and $ a + b \leq n$, 
\[
\Ext^p(P,\CC)_{\langle 2^a,1^b,0^{n-a-b} \rangle} \cong
([a-2,1^2],\sum_{c = 0}^b \Ind([1^{b-c}],[1^{c}]),[n-a-b])_{\langle 2^a,1^b,0^{n-a-b} \rangle} 
\]

\end{itemize}

For all other $\mathbf{d}$, $\Ext(P,\CC)_{\langle \mathbf{d} \rangle} = 0$.
\end{theorem}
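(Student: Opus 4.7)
The plan is to compute the multigraded $\Tor^S_\bullet(P, \CC)$, which determines $\Ext^\bullet_S(P, \CC)$ by duality, by analyzing each multigraded piece as a representation of the $\mf S_n$-stabilizer of $\mathbf{a}$ and then inducing up to $G_n$. First I would establish that the support of $\Ext^\bullet(P, \CC)$ consists of permutations of $(2^a, 1^b, 0^{n-a-b})$: since the generators $f_{ij} = x_iy_j + x_jy_i$ of $P$ have multidegrees $\mathbf{e}_i + \mathbf{e}_j$ with entries in $\{0,1\}$, any multigraded piece of $\Tor$ lies in a multidegree with entries in $\{0,1,2\}$; combining this with the three-linear-strand structure (implicit in the Betti number computation of \cite{permanents}) pins down the supported multidegrees to those of the stated form.

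Next, I would compute $(K_\bullet \otimes_S P)_{\mathbf{a}}$ in each such multidegree $\mathbf{a} = (2^a, 1^b, 0^{n-a-b})$, where $K_\bullet$ is the Koszul complex resolving $\CC$ over $S$. In multidegree $\mathbf{a}$, this is a bounded complex of finite-dimensional representations of $\Stab(\mathbf{a}) = \mf S_a \times \mf S_b \times \mf S_{n-a-b}$. The trivial $\mf S_{n-a-b}$-action on the degree-zero coordinates explains the $[n-a-b]$ factor appearing throughout the theorem. The first strand ($p=0$) is then immediate from the description of the minimal generators: since $f_{ij}$ is symmetric in $i,j$, the $\mf S_2$ subgroup acts trivially on the one-dimensional span of each generator, yielding $([2], [n-2])_{\langle 1^2, 0^{n-2}\rangle}$.

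For the second and third strands, I would identify the $\mf S_a \times \mf S_b$-representation structure on each Koszul homology group. The $\mf S_a$ factor (acting on the $a$ coordinates of multidegree $2$) should produce the trivial representation $[a]$ in the second strand and the hook $[a-2,1^2]$ in the third strand, reflecting two distinct classes of higher syzygies (analogous to Koszul-type versus Plücker-type relations among the $f_{ij}$). The $\mf S_b$ factor (acting on the $b$ coordinates of multidegree $1$) should contribute the induced sign representations listed: each coordinate of multidegree $1$ carries a binary choice between $x_i$ and $y_i$, and antisymmetry of pairs of $f_{ij}$'s sharing an index produces the induced hook pieces via Pieri's rule. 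The main obstacle will be verifying these representation identifications and confirming minimality of the resolution, which I would handle by explicitly computing the Koszul differentials on each multigraded piece and checking that the multiplicities and dimensions agree with the Betti numbers of \cite{permanents}. Once the stabilizer representations are pinned down, inducing from $\Stab(\mathbf{a}) \ltimes (\CC^*)^n$ up to $G_n$, twisted by the character $\chi_{\mathbf{a}}$, recovers the $(\cdot)_{\langle \mathbf{a}\rangle}$ formulas in the theorem statement.
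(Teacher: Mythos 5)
Your proposed route — computing $\Tor^S_\bullet(P,\CC) = H_\bullet(K_\bullet \otimes_S P)$ multidegree-by-multidegree and reading off stabilizer representations — is genuinely different from the paper's. The paper never touches Koszul homology of $P$ directly; instead it introduces the intermediate ideal $D = (x_i y_j : i \neq j)$, sandwiches $P$ between $D$ and $\mf{m}_{xy} = (x_1,\dots,x_n)(y_1,\dots,y_n)$ via two $G_n \times G_2$-equivariant short exact sequences, and chases the long exact sequences in $\Ext(-,\CC)$. The crucial payoff is that $\Ext(\mf{m}_{xy},\CC)$ and $\Ext$ of the (simple, cyclic) cokernels are known as $\bigwedge(V\otimes W)^*$-modules, and the connecting maps $\widetilde\varphi$, $\widetilde\psi$ split into injections, surjections, or zero maps in each multidegree, so $\Ext(D,\CC)$ and then $\Ext(P,\CC)$ fall out as direct sums of kernels and cokernels. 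That reduction is what makes the representation theory tractable. Your plan has no analogous simplification: you are proposing to compute the cohomology of the full Koszul complex of a non-monomial, non-complete-intersection ideal and to identify the resulting $\mf S_a \times \mf S_b$-modules on sight.

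There are two concrete gaps. First, the deduction that all supported multidegrees have entries in $\{0,1,2\}$ ``since the generators have multidegrees with entries in $\{0,1\}$'' is not valid: the generators of a module having bounded multidegree entries does not bound the multidegree entries of higher $\Tor$'s. (The claim is true for $P$, but it has to be extracted from the structure of the resolution — in the paper it emerges from the analysis of $\widetilde\varphi$ and $\widetilde\psi$ — not from the generators alone.) Second, and more seriously, the step ``identify the $\mf S_a \times \mf S_b$-representation structure on each Koszul homology group\dots by explicitly computing the Koszul differentials\dots and checking dimensions against \cite{permanents}'' is not an argument. Matching dimensions against the known Betti numbers cannot determine a representation — it only constrains its dimension — and ``explicitly computing the Koszul differentials'' on an infinite family of multidegrees $(2^a,1^b,0^{n-a-b})$ is precisely the hard combinatorial problem the paper was designed to circumvent. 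As written, the proposal identifies the right answer ($[a]$ and $[a-2,1^2]$ in the $\mf S_a$ slot, induced sign representations in the $\mf S_b$ slot) by pattern-matching against the theorem statement rather than deriving it, so the central step of the proof is missing.
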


\begin{example}
When $n = 3$, the ideal $(x_1y_2 + x_2y_1,x_1y_3 + x_3y_1,x_2y_3+x_3y_2)$ is a complete intersection, whose resolution is given by the Koszul complex. Each of the three linear strands of the minimal free resolution is non-zero in the orbit of one multidegree:
\begin{align*}
\Ext^0(P,\CC) &\cong ([2],[1])_{\langle 1^2,0 \rangle} \\
\Ext^1(P,\CC) &\cong ([1],[1^2])_{\langle 2,1^2 \rangle} \\
\Ext^2(P,\CC) &\cong ([1^3])_{\langle 2^3 \rangle} 
\end{align*}

We can restrict to irreducible $\mf S_n$-representations by performing the induction in each multidegree using Pieri's rule. This results in the following isomorphisms of $\mf S_n$-representations:
\begin{align*}
\Ext^0(P,\CC)_2 &\cong [3] + [2,1] \\
\Ext^1(P,\CC)_4 &\cong [2,1] +[1^3] \\
\Ext^2(P,\CC)_6 &\cong [1^3]
\end{align*}
These are the exterior powers of $[3] + [2,1]$, as expected for the Koszul complex. When $n > 3$, $P$ is no longer a complete intersection, and its minimal free resolution is more complicated. For example, when $n=4$:
\begin{align*}
\Ext^0(P,\CC) &\cong ([2],[2])_{\langle 1^2,0^2 \rangle} \\
\Ext^1(P,\CC) &\cong ([1],[1^2],[1])_{\langle 2,1^2,0 \rangle} \oplus ([3,1] + 2[2,2] + [2,1^2])_{\langle 1^4 \rangle} \\
\Ext^2(P,\CC) &\cong ([1],2[2,1] + 2[1^3])_{\langle 2,1^3 \rangle} \oplus ([1^3],[1])_{\langle 2^3,0 \rangle} \\
\Ext^3(P,\CC) &\cong ([2],[1^2])_{\langle 2^2,1^2 \rangle} \oplus ([1^3],2[1])_{\langle 2^3,1 \rangle} \\
\Ext^4(P,\CC) &\cong ([2,1^2])_{\langle 2^4 \rangle} 
\end{align*}

\end{example}

\subsection{Graded Betti Numbers}
\par
In \cite{permanents}, the authors study the graded Betti numbers of $P$. They prove the following:
\begin{theorem}[\cite{permanents}, Thm. 1.2]
Let $\beta_{p,q} := \dim_\CC \Tor_p^S(P,\CC)_q$. Then
\[
\beta_{0,2} = \binom{n}{2}
\]
\[
\beta_{p,p+3} = 2 \binom{n}{p+3} - \binom{2n}{p+3} + \binom{n+1}{2} \binom{2n-2}{p+1} - 2 \binom{2n-3}{p} \binom{n}{2}
\]
\[
+ \sum_{a = 3}^{\lfloor \frac{p+3}{2} \rfloor} 2^{p + 3 - 2a} \binom{n}{a} \binom{n-a}{p+3-2a} \binom{a-1}{2}
\]
\[
\beta_{p,p+4} = \sum_{a = 3}^{\lfloor \frac{p+4}{2} \rfloor} 2^{p+4-2a} \binom{n}{a} \binom{n-a}{p+4-2a} \binom{a-1}{2}
\]
\[
\beta_{p,q} = 0 \text{ otherwise}
\]
with the convention that $\binom{a}{b} = 0$ pf $b < 0$ or $b > a$. 
\end{theorem}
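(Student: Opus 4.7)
The plan is to read off $\beta_{p,q} = \dim_\CC \Tor^S_p(P,\CC)_q$ directly from the equivariant description in Theorem \ref{Gn version}. Since the Betti numbers are equivalently the graded dimensions of $\Ext^\bullet_S(P,\CC)$, and since $\Ext^p_S(P,\CC)$ decomposes as a direct sum of $G_n$-representations supported on $\mf S_n$-orbits of partitions $\mathbf a = (2^a,1^b,0^{n-a-b})$ with $2a+b$ equal to the total degree, we only need to sum orbit-by-orbit. For an orbit $\langle d_1^{p_1},\dots,d_\ell^{p_\ell}\rangle$ carrying the stabilizer representation $(\rho_1,\dots,\rho_\ell)$, the corresponding summand has dimension $\binom{n}{p_1,\dots,p_\ell}\cdot\prod_i \dim\rho_i$. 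Every $\rho_i$ appearing in Theorem \ref{Gn version} is a trivial, sign, or hook $[a-2,1^2]$ representation, or an induction of sign characters, with dimensions $1$, $\binom{a-1}{2}$, and $\binom{b}{b_1,\dots,b_r}$ respectively; so each $\beta_{p,q}$ reduces to a concrete multinomial sum.

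The first and third strands follow by direct evaluation. For the first strand, $\beta_{0,2}$ is just the orbit size $\binom{n}{2}$ of $(1^2,0^{n-2})$. For the third strand with $a \geq 3$ and $b = p+4-2a$, the orbit $\langle 2^a,1^b,0^{n-a-b}\rangle$ contributes
\[
\binom{n}{a,b,n-a-b}\cdot\binom{a-1}{2}\cdot\sum_{c=0}^{b}\binom{b}{c} = 2^b\binom{n}{a}\binom{n-a}{b}\binom{a-1}{2},
\]
which upon summing over $a$ yields precisely the claimed closed form for $\beta_{p,p+4}$.

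The second strand has two families of orbits. The type-1 orbits $\langle 2^a,1^b,0^{n-a-b}\rangle$ with $a\geq 1$, $b\geq 2$, and $2a+b=p+3$ each contribute $\binom{n}{a,b,n-a-b}\binom{b}{2}2^{b-2}$, using $\sum_{c=0}^{b-2}\binom{b}{2,b-2-c,c} = \binom{b}{2}2^{b-2}$. The type-2 orbit $\langle 1^{p+3},0^{n-p-3}\rangle$ carries a virtual difference of induced representations, whose dimension simplifies to $\binom{n}{p+3}\bigl[\binom{p+3}{2}2^{p+1} - 2^{p+3} + 2\bigr]$ via $\sum_{c=0}^{p+1}\binom{p+3}{c+1} = 2^{p+3}-2$. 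Adding these gives $\beta_{p,p+3}$ as an explicit sum over $a$.

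The principal obstacle is the combinatorial rearrangement that identifies this sum with the four-term closed form of the theorem. The target expression contains $\binom{2n}{p+3}$, $\binom{2n-2}{p+1}$, and $\binom{2n-3}{p}$, which do not appear naturally in the equivariant formula; they must be produced by Vandermonde-type convolutions such as $\sum_{a+b=m}\binom{n}{a}\binom{n}{b} = \binom{2n}{m}$ and weighted analogues. My plan is to isolate the $a = 0, 1, 2$ contributions, show that they combine with the type-2 piece to produce $2\binom{n}{p+3} - \binom{2n}{p+3} + \binom{n+1}{2}\binom{2n-2}{p+1} - 2\binom{2n-3}{p}\binom{n}{2}$, and leave the $a \geq 3$ terms in their multinomial form as the claimed tail sum. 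Verifying that the $a \geq 3$ type-1 terms $\binom{n}{a,b,n-a-b}\binom{b}{2}2^{b-2}$ really do coincide with the theorem's $2^b\binom{n}{a}\binom{n-a}{b}\binom{a-1}{2}$ after reindexing is the most delicate step, since $\binom{b}{2}$ must be converted to $\binom{a-1}{2}$ via the constraint $2a+b = p+3$ together with the absorbed boundary corrections.
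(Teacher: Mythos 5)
Your reduction of the Betti numbers to orbit-by-orbit dimension counts from Theorem \ref{Gn version} is sound and is exactly what the paper itself does to obtain Theorem \ref{our betti}. The strand computations are correct: $\beta_{0,2}=\binom{n}{2}$; the third strand gives $\sum_{a\geq 3}\binom{n}{a}\binom{n-a}{p+4-2a}\binom{a-1}{2}2^{p+4-2a}$, which already coincides literally with the cited formula for $\beta_{p,p+4}$; and for the second strand your type-1 dimension $\binom{n}{a,b,n-a-b}\binom{b}{2}2^{b-2}$ and type-2 dimension $\binom{n}{p+3}\bigl[\binom{p+3}{2}2^{p+1}-2^{p+3}+2\bigr]$ are both right. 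In fact, combining them gives the clean closed form $\binom{n}{2}\binom{2n-4}{p+1}-\binom{n}{p+3}(2^{p+3}-2)$, which is what the paper records; the paper then simply observes the formulas must agree without re-deriving the cited expression.

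The gap is in your final step. Your plan to ``isolate the $a=0,1,2$ contributions'' and match the $a\geq 3$ type-1 terms to the tail $\sum_{a\geq 3}2^{p+3-2a}\binom{n}{a}\binom{n-a}{p+3-2a}\binom{a-1}{2}$ termwise cannot work, and the hoped-for ``conversion of $\binom{b}{2}$ to $\binom{a-1}{2}$ via $2a+b=p+3$'' is an illusion: these binomials are not equal, and no such identity exists. A concrete counterexample: take $n=5$, $p=3$. Your type-1 terms are confined to $a\in\{1,2\}$ because $a\geq 3$ forces $b=p+3-2a\leq 0<2$, so the $a\geq 3$ type-1 contribution is zero; yet the cited tail sum has a nonzero $a=3$ term equal to $2^{0}\binom{5}{3}\binom{2}{0}\binom{2}{2}=10$. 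The tail sum of $\beta_{p,p+3}$ in the cited theorem is not a subset of your type-1 pieces; it is a by-product of a Vandermonde-style expansion of the whole expression, and its $a\geq 3$ index is not the same parameter as your $a$ (the cited tail comes from the $\beta_{p,p+4}$-shaped piece their proof tracks through the initial ideal). To actually verify the cited $\beta_{p,p+3}$ formula you would need a global identity equating $\binom{n}{2}\binom{2n-4}{p+1}-\binom{n}{p+3}(2^{p+3}-2)$ with their four-term-plus-tail expression, proved as a whole (by generating functions or repeated Vandermonde), not by matching pieces indexed by the same letter $a$.
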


We can easily recover a formula for the graded Betti numbers from Theorem \ref{Gn version}. For a representation $\rho_1 \boxtimes \dots \boxtimes \rho_\ell$ corresponding to a fixed multidegree $\mathbf a = (d_1^{p_1},\dots,d_\ell^{p_\ell})$, its dimension is $\prod_i \dim \rho_i$. The dimension of $(\rho_1,\dots,\rho_\ell)_{\langle \mathbf a \rangle}$, then, is $\prod \dim \rho_i$ times the number of $\mf S_n$-orbits of $\mathbf a$, namely $\binom{n}{p_1,\dots,p_\ell}\prod_i \rho_i$, where $\binom{n}{p_1,\dots,p_\ell}$ is the multinomial coefficient. Since $[i]$ and $[1^j]$ have dimension one, and $[a-2,1^2]$ has dimension $\binom{a-1}{2}$, we get the following formula for the graded Betti numbers:

\begin{theorem}
\label{our betti}
Let $\beta_{p,q} = \dim_\CC \Ext^p(P,\CC)_q = \dim_\CC \Tor^p(P,\CC)_q$. Then 

\[
\beta_{0,2} = \binom{n}{2} 
\]

\[
\beta_{p,p+3} = \sum_{a=0}^{\lfloor \frac{p+1}{2} \rfloor}  \binom{n}{a,p+3-2a,n+a-p-3} 2^{p+1-2a} - \binom{n}{p+3}(2^{p+3} - 2)
\]
\[
 = \binom{n}{2} \binom{2n-4}{p+1} - \binom{n}{p+3}(2^{p+3} - 2)
\]
\[
\beta_{p,p+4} = \sum_{a = 3}^{\lfloor \frac{p+4}{2} \rfloor}
 \binom{n}{a,p+4-2a,n+a-p-4}\binom{a-1}{2} 2^{p+4-2a}
\]
\[
= \binom{n}{2} \binom{2n-4}{p-1} - n \binom{2n-2}{p+1} + \binom{2n}{p+3} - \binom{n}{p+3} 2^{p+3}
\]
\end{theorem}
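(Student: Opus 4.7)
The strategy is a direct dimension count starting from Theorem \ref{Gn version}. For each multidegree $\mathbf{a} = (2^a, 1^b, 0^{n-a-b})$ appearing there, the number of $\mf S_n$-orbits is the multinomial $\binom{n}{a, b, n-a-b}$, so $\dim \Ext^p(P,\CC)_{\langle \mathbf{a}\rangle}$ equals $\binom{n}{a, b, n-a-b}$ times the dimension of the stabilizer representation $B_{\mathbf{a}}$ listed in the theorem. Using $\dim [j] = \dim [1^j] = 1$, $\dim [k-2, 1^2] = \binom{k-1}{2}$, and the fact that an induced representation of a sign representation from a Young subgroup has dimension equal to the index $\binom{j_1 + \cdots + j_\ell}{j_1, \dots, j_\ell}$, the inner sums over $c$ collapse to powers of $2$: one finds $\sum_{c=0}^{b-2} \binom{b}{2, b-2-c, c} = \binom{b}{2} \cdot 2^{b-2}$ in the second strand and $\sum_{c=0}^{b} \binom{b}{c} = 2^{b}$ in the third. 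Adding these contributions over admissible $(a, b)$ with $2a + b = q$ yields the stated sum expressions; the boundary multidegree $\langle 1^{p+3}, 0^{n-p-3}\rangle$ in the second strand produces the additional $-\binom{n}{p+3}(2^{p+3} - 2)$ correction, coming from the subtractive $-\Ind([1^{p+2-c}], [1^{c+1}])$ terms in Theorem \ref{Gn version}.

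The closed-form simplification rests on the combinatorial identity
\[
\sum_{a \geq 0} \binom{m}{a} \binom{m - a}{k - 2a}\, 2^{k - 2a} = \binom{2m}{k},
\]
which I would prove by partitioning $[2m]$ into $m$ pairs and classifying each $k$-subset by the number $a$ of pairs that are fully selected. For the second strand, the rewriting $\binom{n}{a, b, n-a-b} \binom{b}{2} = \binom{n}{2} \binom{n-2}{a} \binom{n-a-2}{b-2}$ reduces the sum to this identity with $m = n - 2$ and $k = p + 1$, yielding the closed form $\binom{n}{2} \binom{2n-4}{p+1}$. For the third strand I would expand $\binom{a-1}{2} = \binom{a}{2} - (a - 1)$, then use $\binom{n}{a} \binom{a}{2} = \binom{n}{2} \binom{n-2}{a-2}$ and $a \binom{n}{a} = n \binom{n-1}{a-1}$ to rewrite each of the resulting summands as an unrestricted multinomial sum of the form above, and apply the identity three times with appropriately shifted parameters.

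The main obstacle I anticipate is the bookkeeping for the third strand. The third strand sum is restricted to $a \geq 3$ whereas the key identity is unrestricted, so each of the three pieces produced by expanding $\binom{a-1}{2}$ contributes spurious boundary terms at the omitted indices $a = 0, 1, 2$ that must be subtracted by hand. After this subtraction several expressions polynomial in $n$ and $p$ appear that cancel via the identity $(n-1-p)\binom{n-1}{p} = (n-1)\binom{n-2}{p}$, leaving only the stated closed form. Nothing in this step is deep, but signs and index shifts must be tracked carefully.
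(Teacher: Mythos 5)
Your proposal goes by a genuinely different route than the paper's own derivation. The paper does not prove the closed forms by manipulating the multinomial sums at all. In the proof of Theorem~\ref{P} the Betti numbers are obtained directly by counting monomial bases: each $A^{(i,j)}$ is an exterior algebra on $2n-4$ generators, so the internal degree $p+3$ piece of $\oplus_{i<j} A^{(i,j)}\,\overline{x_i y_j - x_j y_i}$ has dimension $\binom{n}{2}\binom{2n-4}{p+1}$ with no summation needed, and the subtracted term $\binom{n}{p+3}(2^{p+3}-2)$ is just a count of the ``squarefree'' monomials supported on $p+3$ indices with at least one $e$ and one $f$. The same style of count, applied to $\oplus A^{(i,j)}\,\overline{x_i^2y_j^2 - x_j^2y_i^2}$ and to $\ker\widetilde\varphi$ (whose dimension is already computed in Proposition~\ref{D}), gives the third-strand closed form. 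The summation forms in Theorem~\ref{our betti} are, for the paper, an afterthought read off from Theorem~\ref{Gn version}. Your route instead starts from the summation form and compresses it with the identity $\sum_a \binom{m}{a}\binom{m-a}{k-2a}2^{k-2a}=\binom{2m}{k}$; that identity is correct (the pairing argument you sketch works), the rewriting $\binom{n}{a,b,n-a-b}\binom{b}{2}=\binom{n}{2}\binom{n-2}{a}\binom{n-a-2}{b-2}$ is correct, and the boundary-term cancellations in the third strand do occur. What the paper's approach buys is that the closed forms appear for free from the structure of the resolution; what your approach buys is a purely combinatorial check that is independent of the Ext machinery.

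Two things you should be aware of before writing this up, because a careful execution of your plan will \emph{not} reproduce the formulas exactly as printed in Theorem~\ref{our betti}. First, the printed summation form for $\beta_{p,p+3}$ is missing the factor $\binom{p+3-2a}{2}$ that comes from $\dim\sum_{c}\Ind([1^2],[1^{b-2-c}],[1^c])=\binom{b}{2}2^{b-2}$, which you yourself computed. Your sentence ``yields the stated sum expressions'' passes over this; the corrected sum is $\sum_a \binom{n}{a,p+3-2a,n+a-p-3}\binom{p+3-2a}{2}2^{p+1-2a}$, and only with that extra factor does the identity produce $\binom{n}{2}\binom{2n-4}{p+1}$. (Check $n=4$, $p=1$: the printed sum gives $16$ and hence $\beta_{1,4}=2$, but the correct value from the paper's own Example~1.2 is $22$.) Second, if you carry out the third-strand bookkeeping as you outline, you will arrive at
\[
\beta_{p,p+4}=\binom{n}{2}\binom{2n-4}{p}-n\binom{2n-2}{p+2}+\binom{2n}{p+4}-\binom{n}{p+4}2^{p+4},
\]
which differs from the printed closed form by a uniform shift of one in the lower binomial indices. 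Your answer is the right one: for $n=3$, $p=2$ the summation form gives $\binom{3}{3,0,0}\binom{2}{2}2^0=1$ and the Koszul complex confirms $\beta_{2,6}=1$, while the printed closed form evaluates to $0$. The extra identity $(n-1-p)\binom{n-1}{p}=(n-1)\binom{n-2}{p}$ you anticipated needing is not actually required; the leftover boundary terms are $\binom{n-2}{p}2^p$ and $\binom{n-1}{p+2}2^{p+2}$, and their coefficients cancel by elementary algebra ($-\binom{n}{2}+n(n-1)-\binom{n}{2}=0$ and $n-n=0$). So your plan is sound; just don't take the printed target formulas at face value.
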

It is easy to see that the two formulas match in the third strand, though our formula for the second strand is a bit simpler. In fact, in the course of proving Theorem \ref{P}, we will easily arrive at the formulas without summation notation for the second and third linear strand.

To prove their formula, the authors of \cite{permanents} compute the third row of the Betti table of the initial ideal of $P$ using its Stanley-Reisner ring. Then, they use the Bernstein-Gelfand-Gelfand correspondence to show that the third row of the Betti table of $P$ is identical to that of its initial ideal. Finally, they use the Hilbert series of $P$ to determine the second row.
\par
Their proof doesn't give us access to the representation-theoretic aspects of the syzygies because it relied on the syzygies of the initial ideal, which is not invariant under symmetric group actions. Our proof, then, necessarily differs from theirs. We use short exact sequences coming from inclusions of $G_n \times G_2$-invariant ideals
\[
\tag{1}
0 \to (x_i y_j: i \neq j) \to (x_1,\dots,x_n)(y_1,\dots,y_n) \xrightarrow{\varphi} C_1 \to 0 
\]
\[
\tag{2}
0 \to P \to (x_i y_j: i \neq j) \xrightarrow{\psi} C_2 \to 0
\]
\par
The cokernels $C_1$ and $C_2$ are fairly simple, being direct sums of cyclic modules. We will apply $\Ext(-,\CC)$ to  (1) to describe $\Ext((x_i y_j : i \neq j),\CC)$, then use (2) to determine $\Ext(P,\CC)$. The morphisms involved have a lot of structure; they are morphisms both of $G_n \times G_2$-invariant vector spaces and of $\Ext(\CC,\CC) \cong \bigwedge(V \otimes W)^*$-modules. We take advantage of this structure to show that $\Ext(\varphi,\CC)$ and $\Ext(\psi,\CC)$ can be decomposed into injective, surjective, or zero maps in each multidegree, and that $\Ext((x_i y_j : i \neq j),\CC)$ and $\Ext(P,\CC)$ are direct sums of their kernels and cokernels. We then use the $G_n \times G_2$-representation structure of $\bigwedge(V \otimes W)^*$ to determine the representations of the kernels and cokernels.
\par

\subsection{Proof Outline}

\par
In this subsection, we will give an outline for the proof when $n=3$. Though $P$ is easily resolved by the Koszul complex in this case, the method of proof we use here applies with little modification when $n > 3$. The exact sequence (1) above in this case can be written as
\[
0 \to (x_1 y_2, x_2 y_1, x_1 y_3, x_3 y_1, x_2 y_3, x_3 y_2) \to (x_1,x_2,x_3)(y_1,y_2,y_3) \xrightarrow{\varphi} \substack{k[x_1,y_1] \overline{x_1 y_1} \\
\oplus k[x_2,y_2] \overline{x_2 y_2} \\
\oplus k[x_3,y_3] \overline{x_3 y_3} } \to 0
\]
Our first step is to apply $\Ext(-,\CC)$ to the sequence to determine $\Ext(D,\CC)$, where $D := (x_i y_j : i \neq j)$. This gives us the map $\widetilde \varphi := \Ext(\varphi,\CC)$ of $A := \bigwedge(V \otimes W)^*$-modules
\[
 \substack{A/(e_1,f_1) \overline{x_1 y_1} \\
\oplus A/(e_2,f_2) \overline{x_2 y_2} \\
\oplus A/(e_3,f_3) \overline{x_3 y_3} } \xrightarrow{\widetilde \varphi}(e_1,e_2,e_3)(f_1,f_2,f_3)
\]
where $e_i$ is dual to $x_i$ and $f_i$ is dual to $y_i$. The kernels and cokernels of $\widetilde \varphi$ determine $\Ext(D,\CC)$. The map $\widetilde \varphi$ sends $\overline{x_i y_i}$ to  $e_i f_i$. Letting $\mf S_3$ permute the indices of the basis elements $\overline{x_i y_i}$  makes $\widetilde \varphi$ a $G_3$-equivariant map. The ideal $(e_1,e_2,e_3)(f_1,f_2,f_3) \subset A$, as a multigraded vector space, lives in multidegrees $(2,0,0),(1,1,0),(2,1,0),(1,1,1),(2,2,0),(2,1,1),(2,2,1)$,and $(2,2,2)$, along with their $\mf S_3$-orbits, while the multidegrees appearing in the source of $\widetilde \varphi$ is the subset of those multidegrees which contain a 2. Now $\widetilde \varphi$ in degree $(2,0,0)$ sends $\overline{x_i y_i}$ to $e_i f_i$, so is surjective. Since $\widetilde \varphi$ is a $A$-module morphism, it is also surjective on all multidegrees containing a 2. But in each multidegree with no 2's, $\widetilde \varphi$ is the zero map. 
\par 
The kernel of $\widetilde \varphi$ is generated by $e_i f_i \overline{x_j y_j} - e_j f_j \overline{x_i y_i} $, and so lives in multidegrees $(2,2,0),(2,2,1),$ and $(2,2,2)$. The kernel and cokernel of $\widetilde \varphi$ live in distinct sets of multidegrees, so $\Ext(D,\CC)$ is the direct sum of the kernel and cokernel. The following table gives a basis for $\Ext(D,\CC)$ in each multidegree, along with its representation under the action of its stabilizing subgroup (e.g. multidegree $(1,1,0)$ has $\mf S_3$-stabilizer $\mf S_2 \times \mf S_1$, permuting the indices 1 and 2).

\begin{table}[h]
\centering
\resizebox{\columnwidth}{!}{
    \begin{tabular}{|c|c|c|c|}
    
        \hline 
        Module & Multidegree &  Basis & Representation\\ \hline \hline
        $\Ext^0(D,\CC)$ & $(1,1,0)$ & $e_1f_2,e_2f_1$ & $\Ind([1],[1]) \boxtimes [1]$ \\ \hline
        $\Ext^1(D,\CC)$&$(1,1,1)$ & $e_1 e_2 f_3, e_1 f_2 e_3, f_1 e_2 e_3, f_1 f_2 e_3, f_1 e_2 f_3, e_1 f_2 f_3$ &  $\Ind([1^2],[1]) + \Ind([1],[1^2])$ \\ \hline
         & $(2,2,0)$ & $e_1 f_1 \overline{x_2 y_2} - e_2 f_2 \overline{x_1 y_1}$ & $[1^2] \boxtimes [1]$\\ \hline
         $\Ext^2(D,\CC)$& $(2,2,1)$ & $e_3(e_1 f_1 \overline{x_2 y_2} - e_2 f_2 \overline{x_1 y_1}),f_3(e_1 f_1 \overline{x_2 y_2} - e_2 f_2 \overline{x_1 y_1})$ & $[1^2] \boxtimes [1] + [1^2] \boxtimes [1]$\\ \hline
        $\Ext^3(D,\CC)$& $(2,2,2)$ & $e_3 f_3(e_1 f_1 \overline{x_2 y_2} - e_2 f_2 \overline{x_1 y_1}),e_2 f_2(e_1 f_1 \overline{x_3 y_3} - e_3 f_3 \overline{x_1 y_1})$ & $[2,1]$\\ \hline
    \end{tabular}
    }
\end{table}

The representations column can be derived from the representations of $A$ and the properties of $\widetilde \varphi$. We see that $(e_1,e_2,e_3)(f_1,f_2,f_3)_\mathbf{a} \cong A_{\mathbf{a}}$ for any multidegree $\mathbf{a}$ in which $(e_1,e_2,e_3)(f_1,f_2,f_3)_{\mathbf{a}} \neq 0$. The representation $(\oplus A/(e_i,f_i) \overline{e_i f_i})_{\mathbf{a}}$ can be deduced by inducing from an exterior algebra in one fewer variable. For example, in multidegree $(2,2,0)$, $(\oplus A/(e_i,f_i) \overline{x_i y_i})$ has basis $e_1 f_1 \overline{x_2 y_2}, e_2 f_2 \overline{x_1 y_1}$, with representation $\Ind([1],[1]) \boxtimes [1]$, while $A$ has basis $e_1 f_1 e_2 f_2$ with representation $[2] \boxtimes [1]$. Since $\widetilde \varphi$ is surjective, we remove the latter from the former to get the correct representation of $\Ext^1(D,\CC)_{(2,2,0)} = [1^2] \boxtimes [1]$.
\par
Next, we apply $\Ext(-,\CC)$ to the short exact sequence (2) above, which becomes
\[
0 \to (x_1 y_2 + x_2 y_1,x_1 y_3 + x_3 y_1,x_2 y_3 + x_3 y_2) \to D \xrightarrow{\psi} \substack{S/(x_3,y_3,x_1 y_2 + x_2 y_1)\overline{x_1 y_2 - x_2 y_1} \\
\oplus S/(x_2,y_2,x_1 y_3 + x_3 y_1)\overline{x_1 y_3 - x_3 y_1} \\
\oplus S/(x_1,y_1,x_2 y_3 + x_3 y_2)\overline{x_2 y_3 - x_3 y_2} \\} \to 0
\]
This gives us the map $\tilde \psi = \Ext(\psi,\CC)$:
\[
\substack{A/(e_1,f_1,e_2,f_2) \overline{x_1 y_2 - x_2 y_1} \\
\oplus A/(e_1,f_1,e_3,f_3) \overline{x_1 y_3 - x_3 y_1} \\
\oplus A/(e_2,f_2,e_3,f_3) \overline{x_2 y_3 - x_2 y_3} \\} \oplus \substack{A/(e_1,f_1,e_2,f_2) \overline{x_1^2 y_2^2 - x_2^2 y_1^2} \\
\oplus A/(e_1,f_1,e_3,f_3) \overline{x_1^2 y_3^2 - x_3^2 y_1^2} \\
\oplus A/(e_2,f_2,e_3,f_3) \overline{x_2^2 y_3^2 - x_2^2 y_3^2} \\} \xrightarrow{\widetilde \psi} \Ext(D,\CC)
\]
It's easy to see that $\widetilde \psi(\overline{x_1 y_2 - x_2 y_1}) = e_1 f_2 - e_2 f_1$, and one can show that $\widetilde \psi(\overline{x_1^2 y_2^2 - x_2^2 y_1^2}) = e_1 f_1 \overline{x_2 y_2} - e_2 f_2 \overline{x_1 y_1}$, so $\widetilde \psi$ maps the two modules on the left to the separate degree components of $\Ext(D,\CC)$. So, $\Ext(D,\CC)$ again splits into a direct sum of kernels and cokernels, while in each multidegree, the representation of the source is an induced representation from an exterior algebra in fewer variables. In multidegree $(1,1,0)$, $\widetilde \psi$ is injective with cokernel $e_1 f_2 + e_2 f_1$ (corresponding to the generators of $P$) and representation $[2] \boxtimes [1]$, while in multidegree $(1,1,1)$, the map is surjective, as 
\[
\widetilde \psi ( \frac{1}{2} e_i \overline{x_j y_k - x_k y_j} - \frac{1}{2} e_j \overline{x_i y_k - x_k y_i} - \frac{1}{2} e_k \overline{x_i y_j - x_j y_i}) = e_i e_j f_k
\]
Indeed, it is an isomorphism in this multidegree. Since $\widetilde \psi$ maps onto the generators of $\ker \widetilde \varphi$, in multidegree $(2,2,0)$, it is surjective in multidegrees $(2,2,0),(2,2,1)$, and $(2,2,2)$. In fact, it is an isomorphism in multidegrees with two $2$'s. The following table summarizes the behavior of $\widetilde \psi$ in each multidegree.

\begin{table}[h]
\centering
    \begin{tabular}{|c|c|c|c|}
        \hline 
        Source of $\psi$ & Multidegree &  $\widetilde \psi$ & Representation of $\ker/\coker$ of $\psi$\\ \hline \hline
         & $(1,1,0)$ & Injective & $[2] \boxtimes [1]$ \\ \cline{2-4}
        $\oplus A/(e_i,f_i,e_j,f_j) \overline{x_i y_j - x_j y_i}$ &$(1,1,1)$ & Isomorphism &  0 \\ \cline{2-4}
        & $(2,1,1)$ & 0 & $[1] \boxtimes [1^2]$ \\ \hline
         & $(2,2,0)$ & Isomorphism & 0\\ \cline{2-4}
         $\oplus A/(e_i,f_i,e_j,f_j) \overline{x_i^2 y_j^2 - x_j^2 y_i^2}$ & $(2,2,1)$ & Isomorphism & 0\\ \cline{2-4}
        & $(2,2,2)$ & Surjection & $[1^3]$\\ \hline
    \end{tabular}
\end{table}

From this we can determine the representations appearing in $\Ext(P,\CC)$, namely,
\[
\Ext^0(P,\CC) \cong ([2],[1])_{\langle 1^2,0 \rangle}
\]
\[
\Ext^1(P,\CC) \cong ([1],[1^2])_{\langle 2,1^2 \rangle}
\]
\[
\Ext^2(P,\CC) \cong ([1^3])_{\langle 2^3 \rangle}
\]
The table above accurate describes the behavior of $\tilde \psi$ for all multidegrees of a similar form when $n > 3$. For example, on multidegrees $(2^a,1^b,0^{n-a-b})$, $\tilde \psi$ is 0 on the first source when $a = 1$ and $b \geq 2$, and on the second source $\tilde \psi$ is an isomorphism when $a = 2$. The only ``new" kernel or cokernel that appears is in multidegree $(1^b,0^{n-b})$ for $b > 3$, where $\tilde \psi$ is not an isomorphism but only surjective.

\subsection*{Acknowledgments}
I would like to thank my Ph.D advisor Claudiu Raicu for the idea for this project and for helpful guidance throughout.

\section{Representation Structure on the Exterior Algebra}
Our goal in this section is to describe the $G_n \times G_2$-representation structure of $A_\bullet = \Ext^\bullet(\CC,\CC) \cong \bigwedge^\bullet(V \otimes W)^*$, which is used often in the proof of our main theorem. To familiarize ourselves with $G_n$-representations, we first do an example.
\begin{example}
Suppose we want to find the $\mf S_n$-representation of the vector space consisting of all monomials of the form $x_i x_j, x_i y_j, y_i y_j$ in $S$, where $i \neq j$. These are all of the monomials in the $\mf S_n$-orbits of the vector space of multidegree $(1^2,0^{n-2})$, which is spanned by four monomials: $x_1 x_2, x_1 y_2, x_2 y_1$, and $y_1 y_2$. The stabilizer of this multidegree is $\mf S_2 \times \mf S_{n-2}$, where the permutation $(1,2)$ permutes the indices 1 and 2, while $\mf S_{n-2}$ acts trivially. The $\mf S_2$-action breaks this vector space down into four irreducible representations, spanned by $x_1 x_2$, $y_1 y_2$, $x_1y_2 + x_2y_1$, and $x_1y_2 - x_2y_1$, giving us $3 [2] \oplus  [1,1]$. Thus, the $G_n$-representation corresponding all monomials of the form $x_i y_j, x_i y_j, y_i y_j$ for $i \neq j$, is 
\[
S_{\langle 1^2,0^{n-2} \rangle} \cong 
(3 [2] + [1,1] , [n-2])_{\langle 1^2,0^{n-2}\rangle}
\]
\par
Since we have a $G_n \times G_2$-action, we can break this example down further. These monomials lie in the $\mf S_n \times \mf S_2$-orbit of two different $\ZZ^n \times \ZZ^2$-partitions: \\
$(1,1,0^{n-2}) \times (2,0)$, with vector space basis $ x_1 x_2$ \\
$(1,1,0^{n-2}) \times (1,1)$, with vector space basis $x_1 y_2 + x_2 y_1, x_1 y_2 - x_2 y_1$ \\
Permuting the indices 1,2 fixes $x_1 x_2$, and the $\mf S_2$-stabilizer of $(2,0)$ is the trivial group. So, the $G_n \times G_2$-representation corresponding to the $\mf S_n \times \mf S_2$-orbits of $(1,1,0^{n-2}) \times (2,0)$ in $S$, i.e. monomials of the form $x_i x_j, y_i y_j$, is 
\[
([2], [n-2])_{\langle 1^2,0^{n-2} \rangle} \boxtimes ([1],[1])_{\langle 2,0 \rangle}
\]
Whereas the vector space spanned by $x_1y_2 + x_2y_1$ is fixed both by permuting indices and swapping variables, while $x_1 y_2 - x_2 y_1$ is negated. So, the monomials $x_i y_j, x_j y_i$ correspond to the representation 
\[
([2] , [n-2])_{\langle 1^2,0^{n-2} \rangle} \boxtimes ([2])_{\langle 1,1 \rangle}
+ ([1,1] , [n-2])_{\langle 1^2,0^{n-2} \rangle} \boxtimes ([1,1])_{\langle 1,1 \rangle}
\]
\par
Restricting both of these representations to $G_n$ recovers the $G_n$-representation we found above.
\end{example}

We now begin our analysis of the $G_n \times G_2$-representations of $A = \bigwedge(V \otimes W)^*$. We let $e_1,\dots,e_n$, $
f_1,\dots,f_n$ be dual variables to $x_1,\dots,x_n,y_1,\dots,y_n$ generating $A$.
\par
The multidegrees that can appear in $A$ are the $\mf S_n \times \mf S_2$-orbits of $\mathbf{d} := (2^a,1^b,0^{n-a-b}) \times (a + b-c,a + c)$ for $a,b,c$ such that $a + b \leq n$ and $c \leq \lfloor \frac{b}{2} \rfloor$. $A$ in this multidegree has basis
\[
(e_1 f_1 \dots e_a f_a) e_{\sigma(a+1)} \dots e_{\sigma(a+b-c)} f_{\sigma(a+b-c+1)} \dots f_{\sigma(a + b)}
\]
for all permutations $\sigma \in \mf S_b$ that swap $\{a+1,\dots,a+b-c\}$ with $\{a+b-c+1,\dots,a+b\}$. Now the $\mf S_a$-action on $e_1 f_1 \dots e_a f_a$ is trivial, while the space of $e_{\sigma(a+1)} \dots e_{\sigma(a+c)} f_{\sigma(a+c+1)} \dots f_{\sigma(a + b)}$ can be viewed as an induced $\mf S_{b-c} \times \mf S_c$-representation by acting separately on $\wedge V^*$ and $\wedge W^*$, namely $\Ind([1^{b-c}], [1^{c}])$. So, as a $G_n$-representation,
\[
A_{\langle 2^a,1^b,0^{n-a-b} \rangle} \cong ([a],\Ind([1^{b-c}], [1^{c}]), [n-a-b])_{\langle 2^a, 1^b, 0^{n-a-b} \rangle}
\]
When $c \neq \frac{b}{2}$, $a + b - c \neq a+c$, so the $\mf S_2$ action on our basis is the induced trivial $\mf S_1 \times \mf S_1$ representation, i.e. the representation $[2] + [1^2]$. When $c = \frac{b}{2}$, we have to be careful, as the $\mf S_2$-representation will depend on the irrep appearing in 
\[
\Ind([1^{b-c}], [1^{c}]) = \sum_{j = 0}^c [2^{c - j},1^{2j}]
\]
To do this, we restrict the $GL_n(\CC) \times GL_2(\CC)$-representation of $A$  to its  $(2^a,1^{b},0^{n-a-b}) \times (a+c,a+c)$ weight space (see \cite[Chs.7,8]{MR1464693} for background). 
\[
\bigwedge(V \otimes W)^* \cong \bigoplus_{j = 0}^{c} \mathbb{S}_{(2^{a + c-j},1^{2j})} V^* \otimes \mathbb{S}_{(c+j,c-j)} W^*
\]
as $GL_n(\CC) \times GL_2(\CC)$-representations, where $\mathbb{S}_\lambda$ denotes the Schur functor corresponding to $\lambda$. The $(2^a,1^b,0^{n-a-b})$ weight space of $\mathbb{S}_{(2^{a+c-j},1^{2j})} V^*$ is simply $\Ind([a],[2^{c-j},1^{2j}])$. 
Now $\mathbb{S}_{(a+c+j,a+c-j)} W^* \cong ([\wedge^2 \mathbb{C}^2])^{\otimes a+c-j} \otimes \Sym^{2j}(\mathbb{C}^2)$.  The vector space $(\wedge^2 \mathbb{C}^2)^{\otimes a+c-j} \cong [1,1]^{\otimes a+c - j} $ has weight $(a+c-j,a+c-j)$, and the $(a+c,a+c)$ weight space of $\mathbb{S}_{(a+c+j,a+c-j)}$ is spanned by one vector, on which $\mf S_2$ acts trivially. To sum up,
\begin{proposition}
\label{A}
Let $\mathbf{d} = (2^a,1^b,0^{n-a-b})\times (a+b-c,a+c)$ with $a + b \leq n$ and $c \leq \lfloor \frac{b}{2} \rfloor$. The following is an isomorphism of $G_n \times G_2$-representations:
\[
A_{\langle \mathbf{d} \rangle} \cong 
\begin{cases}
([a],\Ind([1^{b-c}], [1^{c}]), [n-a-b])_{\langle 2^a, 1^b, 0^{n-a-b} \rangle} \boxtimes ([1],[1])_{\langle a+b-c,a+c \rangle} \text{ if }c \neq \frac{b}{2} \\
\sum_{j = 0}^c([a], [2^{c-j},1^{2j}],[n-a-b])_{\langle 2^a,1^{b},0^{n-a-b} \rangle }  \boxtimes ([1^2]^{a+c-j})_{\langle (a+c)^2 \rangle} \text{ if } c = \frac{b}{2}
\end{cases}
\]
\end{proposition}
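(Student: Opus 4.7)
My approach begins by fixing the explicit basis of $A_{\langle \mathbf{d}\rangle}$ exhibited immediately before the proposition. Each basis element has the form $(e_1 f_1 \cdots e_a f_a)\cdot \omega$, where $\omega$ is a wedge of $b-c$ letters $e_i$ and $c$ letters $f_i$ drawn from indices in $\{a+1, \ldots, a+b\}$. I would next identify the $\mf S_n$-action on this fixed multidegree. The $\mf S_n$-stabilizer of $(2^a, 1^b, 0^{n-a-b})$ is the Young subgroup $\mf S_a \times \mf S_b \times \mf S_{n-a-b}$: the $\mf S_a$ factor fixes $e_1 f_1 \cdots e_a f_a$ pointwise (each $e_if_i$ is invariant under transpositions), the $\mf S_{n-a-b}$ factor has nothing to act on, and the $\mf S_b$ factor acts on $\omega$. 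Since choosing $\omega$ amounts to splitting $\{a+1,\ldots,a+b\}$ into an $e$-subset of size $b-c$ and an $f$-subset of size $c$, with each wedge contributing the sign, the $\mf S_b$-representation is $\Ind_{\mf S_{b-c} \times \mf S_c}^{\mf S_b}([1^{b-c}] \boxtimes [1^c])$. This gives the $\mf S_n$-tensor factor of the answer in both branches of the formula.

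The two cases then differ only through the $\mf S_2$-analysis. If $c \neq b/2$, the coordinates $a+b-c$ and $a+c$ of the $W^*$-multidegree are distinct, so the $\mf S_2$-stabilizer is trivial and the $\mf S_2$-contribution is just $([1],[1])_{\langle a+b-c, a+c\rangle}$; tensoring with the $\mf S_n$-representation yields the first branch. When $c = b/2$, instead, $\mf S_2$ fixes the multidegree $(a+c, a+c)$ but acts non-trivially by swapping $e$'s and $f$'s, so one cannot simply induce from a stabilizer. Here I would invoke the Schur--Weyl (dual Cauchy) decomposition $\bigwedge(V \otimes W)^* = \bigoplus_\lambda \mathbb{S}_\lambda V^* \otimes \mathbb{S}_{\lambda^T} W^*$, which because $\dim W = 2$ restricts $\lambda^T$ to have at most two parts and hence, at our weight, forces $\lambda = (2^{a+c-j}, 1^{2j})$ for $0 \leq j \leq c$, producing the decomposition already displayed in the excerpt. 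On the $V^*$ side I would use the identification of the $(2^a, 1^b, 0^{n-a-b})$-weight space of $\mathbb{S}_{(2^{a+c-j}, 1^{2j})} V^*$ as the $\mf S_a \times \mf S_b$-representation $\Ind([a], [2^{c-j}, 1^{2j}])$; on the $W^*$ side I would use the factorization $\mathbb{S}_{(a+c+j, a+c-j)} W^* \cong (\wedge^2 W^*)^{\otimes(a+c-j)} \otimes \Sym^{2j}(W^*)$, observing that the first tensor factor carries the $\mf S_2$-representation $[1^2]^{\otimes(a+c-j)}$ while the middle $(j,j)$-weight of the symmetric power is one-dimensional with trivial $\mf S_2$-action. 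Combining these ingredients gives the second branch.

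The step requiring the most care is the identification of the $V^*$-weight space as $\Ind([a], [2^{c-j}, 1^{2j}])$. This is a specialization of the general principle that the $\mu$-weight space of a Schur module carries a natural action of the Young subgroup stabilizer $\mf S_\mu$ which can be read off from Schur--Weyl duality: the $\mf S_a$ factor acts trivially by the same mechanism as in the first paragraph (each position of coordinate $2$ contributes a one-dimensional invariant subspace inside the Schur functor), while the $\mf S_b$ action on the $(1^b)$-part of what remains recovers the Specht module $[2^{c-j}, 1^{2j}]$ by the classical identification of the $(1^k)$-weight space of $\mathbb{S}_\mu V^*$ with $[\mu]$. Once this identification is in place, the rest of the argument is direct bookkeeping.
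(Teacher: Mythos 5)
Your argument is correct and follows the paper's derivation essentially exactly: the same explicit basis and Young-subgroup analysis giving the $\mf S_a \times \mf S_b$ factors $[a]$ and $\Ind([1^{b-c}],[1^c])$, the same dichotomy between $c \neq b/2$ (trivial $\mf S_2$-stabilizer) and $c = b/2$, and the same use of the dual Cauchy decomposition together with the factorization $\mathbb{S}_{(a+c+j,a+c-j)}W^* \cong (\wedge^2 W^*)^{\otimes(a+c-j)} \otimes \Sym^{2j}(W^*)$ in the latter case. Your final paragraph merely fleshes out the identification of the $(2^a,1^b,0^{n-a-b})$-weight space of $\mathbb{S}_{(2^{a+c-j},1^{2j})}V^*$, a step the paper asserts without elaboration, so the reasoning is the same throughout.
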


\section{Proof of Main Theorem}

\subsection{Short Exact Sequences}

Recall the two short exact sequences we will use to prove our theorem:
\[
\tag{1}
0 \to D \to \mf{m}_{xy} \xrightarrow{\varphi} C_1 \to 0
\]
\[
\tag{2}
0 \to P \to D \xrightarrow{\psi} C_2 \to 0
\]
where $\mf{m}_{xy} = (x_1,\dots,x_n)(y_1,\dots,y_n)$, $D = (x_i y_j: i \neq j)$, and the maps on the left are inclusions. We will identify $C_1$ and $C_2$ as direct sums of cyclic modules.

\begin{proposition}
Let $S^{(i)} = S/(x_j: j \neq i)$ and $S^{(i,j)} = S/(x_k: k \neq i,j)$. Then
\[
C_1 \cong \bigoplus_{i} S^{(i)} \overline{x_i y_i} 
\]
\[
C_2 \cong \bigoplus_{i<j} \frac{S^{(i,j)}}{(x_i y_j + x_j y_i)} \overline{x_i y_j - x_j y_i}
\]
where $\overline{x_i y_i}$ is the image of $x_i y_i$ in $S^{(i)}$ and $\overline{x_i y_j - x_j y_i}$ is the image of $x_i y_j - x_j y_i$ in $S^{(i,j)}/(x_i y_j + x_j y_i)$. Under this identification, the maps $\varphi(x_i y_i) = \overline{x_i y_i}$  and, for $i <  j$, $\varphi(x_i y_j) = 0$ and $\psi(x_i y_j - x_j y_i) = \overline{x_i y_j - x_j y_i}$ make sequences (1) and (2) exact sequences and $\varphi$ and $\psi$ $G_n \times G_2$-equivariant maps.

\end{proposition}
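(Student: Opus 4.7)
My plan is to construct $\varphi$ and $\psi$ explicitly on generators, verify exactness using a combination of direct inspection (for $\varphi$) and a multidegree dimension count (for $\psi$), and then check $G_n \times G_2$-equivariance by inspection.

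For sequence (1), I would define $\varphi$ as the $\CC$-linear projection of $\mf{m}_{xy}$ onto the subspace spanned by monomials of the form $x_i^a y_i^b$ ($a, b \geq 1$), sending each such $x_i^a y_i^b$ to $\overline{x_i^{a-1} y_i^{b-1}} \cdot \overline{x_i y_i} \in S^{(i)} \overline{x_i y_i}$. A monomial $x^\alpha y^\beta \in \mf{m}_{xy}$ lies in $D$ iff there exist $i \neq j$ with $\alpha_i, \beta_j \geq 1$, so the complementary subspace is precisely $D$. $S$-linearity is then immediate: multiplying a ``good'' monomial $x_i^a y_i^b$ by a variable outside $\{x_i, y_i\}$ produces a monomial in $D$, matching the fact that such variables act as zero on $S^{(i)} \overline{x_i y_i}$. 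Surjectivity and $\ker \varphi = D$ follow by construction, yielding exactness of (1).

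For sequence (2), I would define $\psi$ on module generators by $\psi(x_i y_j - x_j y_i) = \overline{x_i y_j - x_j y_i}$ (in the $(i,j)$-summand) and $\psi(x_i y_j + x_j y_i) = 0$ for $i < j$, and extend $S$-linearly. Well-definedness reduces to checking that Koszul-type syzygies among $\{x_i y_j\}_{i \neq j}$ (e.g., $y_l \cdot x_i y_j = y_j \cdot x_i y_l$) are respected, which is automatic since any variable outside $\{x_i, y_i, x_j, y_j\}$ acts as zero on the $(i,j)$-summand of $C_2$. The inclusion $P \subseteq \ker \psi$ is built in, and $\psi$ surjects onto each summand since $\overline{x_i y_j - x_j y_i}$ is in the image and cyclically generates its summand. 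To obtain $\ker \psi = P$, I would compare dimensions multidegree-by-multidegree using the $\ZZ^n$-grading in which $x_i, y_i$ both have weight $\mathbf{e}_i$: both $D/P$ and $C_2$ vanish unless $\mathbf{d}$ has support of size exactly $2$, and for $\mathbf{d} = a\mathbf{e}_i + b\mathbf{e}_j$ a direct monomial count gives $\dim (D/P)_\mathbf{d} = \dim (C_2)_\mathbf{d} = a + b - 1$ (using that $x_i y_j + x_j y_i$ is a nonzerodivisor in the relevant polynomial ring). Combined with surjectivity, this forces the induced map $D/P \to C_2$ to be an isomorphism.

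Finally, $G_n \times G_2$-equivariance follows from the explicit formulas: $\mf S_n$ permutes the summands of $C_1, C_2$ via its action on indices, the tori preserve the multigrading, and the swap $\tau \in \mf S_2$ fixes each $\overline{x_i y_i}$ while sending $\overline{x_i y_j - x_j y_i}$ to its negative, acting by the sign representation on each $C_2$-summand. The most delicate point is the dimension count establishing $\ker \psi = P$; an alternative approach would be to construct an explicit $S$-linear splitting $C_2 \to D$ sending $\overline{x_i y_j - x_j y_i}$ back to $\tfrac{1}{2}(x_i y_j - x_j y_i)$, trading the dimension count for a separate well-definedness verification.
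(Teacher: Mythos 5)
Your approach to sequence (1) is a genuine and clean simplification of the paper's. The paper verifies exactness of both sequences by computing the $\ZZ^n$-graded Hilbert functions of all five modules (citing a result on the initial ideal of $P$ to handle $\HF(P)$) and checking additivity, whereas your direct monomial decomposition $\mf m_{xy} = D \oplus \bigoplus_i \Span\{x_i^a y_i^b : a,b\geq 1\}$ and the check that the projection is $S$-linear is self-contained and avoids any Hilbert function computation for (1). Your equivariance argument matches the paper's.

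For sequence (2), however, there is a genuine gap. You assert that $D/P$ vanishes unless $\mathbf{d}$ has support of size exactly $2$, and this is precisely the nontrivial input that the paper supplies via the initial ideal of $P$ under the antidiagonal order: one needs $P_{\mathbf d} = D_{\mathbf d}$ whenever $|\supp \mathbf d| \geq 3$, and this is not obvious. It is true and can be shown elementarily, for instance by verifying that for distinct $i,j,k$ one has $2x_i y_j y_k = y_k(x_iy_j + x_jy_i) + y_j(x_iy_k+x_ky_i) - y_i(x_jy_k+x_ky_j) \in P$, and symmetrically $2x_ix_ky_j \in P$, which forces every monomial in $D$ whose multidegree has support $\geq 3$ into $P$; but as written, your proposal merely asserts the vanishing. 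Without this, the dimension count only establishes $\ker\psi = P$ in support $\leq 2$, while in support $\geq 3$ you would only know $\ker\psi = D_{\mathbf d}$, which must still be reconciled with $P_{\mathbf d}$.

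A smaller point: in justifying well-definedness of $\psi$, the reduction to ``Koszul-type syzygies $y_l\cdot x_iy_j = y_j\cdot x_iy_l$'' omits the Koszul syzygy between $x_iy_j$ and $x_jy_i$, whose image lands in the single $(i,j)$-summand as $(x_iy_j+x_jy_i)\cdot\overline{x_iy_j - x_jy_i}$. Its vanishing comes from the quotient by $(x_iy_j+x_jy_i)$, not from variables outside $\{x_i,y_i,x_j,y_j\}$ acting as zero, so your stated justification does not cover this case; it should be added explicitly. Finally, the proposed alternative via an $S$-linear splitting $C_2 \to D/P$ would only show $C_2$ is a direct summand of $D/P$ and would still require the support-$\geq 3$ vanishing to conclude.
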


The map $\varphi$ maps $S$-module generators of $\mf{m}_{xy}$ to generators of $C_1$, so it is surjective. It also clearly respects the group action. One set of minimal $S$-module generators for $D$ is $\bigcup_{i<j} \{x_i y_j + x_j y_i, x_i y_j - x_j y_i\}$, so again we can see that $\psi$ is surjective and $G_n \times G_2$ equivariant. 
\par 
To prove (1) and (2) are exact, we just need to show that the Hilbert functions of the modules appearing in (1) and (2), after making the proposed replacements, are additive. We will in fact use the $\ZZ^n$-graded Hilbert function for each module. To help with notation, we say that for $\mathbf{a} \in \ZZ^n$, $\supp \mathbf{a} = \{i : a_i \neq 0\}$.

\begin{proposition}
Let $\mathbf a = (a_1,\dots,a_n) \in \ZZ_{\geq 0}^n$ be a multidegree.

\[
\HF_{\mathbf{a}}(D) = \begin{cases}
          \prod_i(a_i + 1) - 2 \quad &\text{if } |\supp \mathbf a| \geq 2 \\
          0 \quad & \text{otherwise}
     \end{cases}
\]
\[
\HF_{\mathbf{a}}(\mf{m}_{xy}) = \begin{cases}
          \prod_i(a_i + 1) - 2 \quad &\text{if } |\supp \mathbf a| \geq 1 \\
          0 \quad & \text{otherwise}
     \end{cases}
\]
\[
\HF_{\mathbf{a}}(\bigoplus S^{(i)} \overline{x_i y_i}) = \begin{cases}
          a_i - 1 \quad &\text{if } \mathbf a = a_i \mathbf e_i \\
          0 \quad & \text{otherwise}
     \end{cases}
\]
\[
\HF_{\mathbf{a}}(P) = \begin{cases}
          \prod_i(a_i + 1) - 2 \quad &\text{if } |\supp \mathbf a| \geq 3 \\
          a_i a_j \quad &\text{if } \mathbf a = a_i \mathbf e_i + a_j \mathbf e_j \\
          0 \quad & \text{otherwise}
     \end{cases}
\]
\[
\HF_{\mathbf{a}}(\bigoplus \frac{S^{(i,j)}}{(x_i y_j + x_j y_i)} \overline{x_i y_j - x_j y_i}) = \begin{cases}
          a_i + a_j - 1 \quad &\text{if } \mathbf a = a_i \mathbf e_i + a_j \mathbf e_j \\
          0 \quad & \text{otherwise}
     \end{cases}
\]

\end{proposition}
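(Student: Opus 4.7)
The five formulas are largely independent, so my plan is to treat each in turn. For $\mf{m}_{xy}$ and $D$ (monomial ideals), I would count complement monomials: the total number of monomials of multidegree $\mathbf{a}$ in $S$ is $\prod_i(a_i+1)$, and a monomial lies outside $\mf{m}_{xy}$ iff it is pure in the $x$'s or pure in the $y$'s (two exceptions when $\mathbf{a}\ne 0$), while it lies outside $D$ iff all its variables share a single index (giving the two pure-monomial exceptions when $|\supp\mathbf{a}|\ge 2$, and excluding all $\prod_i(a_i+1)$ monomials when $|\supp\mathbf{a}|=1$). For $C_1$, each summand $S^{(i)}\overline{x_iy_i}$ (read as $\CC[x_i,y_i]\overline{x_iy_i}$, consistent with the $n=3$ example) is supported in multidegrees $a_i\mathbf{e}_i$, and after shifting by the degree $2\mathbf{e}_i$ of the generator has dimension $\max(a_i-1,0)$. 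For $C_2$, since $x_iy_j+x_jy_i$ is a non-zero-divisor in the polynomial ring $S^{(i,j)}=\CC[x_i,y_i,x_j,y_j]$, the principal ideal is free of rank one and the quotient has multigraded Hilbert function $(c_i+1)(c_j+1)-c_ic_j=c_i+c_j+1$; shifting by $\mathbf{e}_i+\mathbf{e}_j$ produces $a_i+a_j-1$.

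The formula for $P$ is the substantive one, and splits by $|\supp\mathbf{a}|$. If $|\supp\mathbf{a}|\le 1$, then $P\subseteq D$ gives $\HF_\mathbf{a}(P)=0$. If $\mathbf{a}=a_i\mathbf{e}_i+a_j\mathbf{e}_j$ with $a_i,a_j\ge 1$, then only the generator $x_iy_j+x_jy_i$ of $P$ can contribute in this multidegree (any other $x_ky_l+x_ly_k$ would require a factor in the missing index $k$ or $l$); since this generator is a non-zero-divisor on $S^{(i,j)}$, one obtains $\dim P_\mathbf{a}=\dim S^{(i,j)}_{(a_i-1)\mathbf{e}_i+(a_j-1)\mathbf{e}_j}=a_ia_j$. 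The case $|\supp\mathbf{a}|\ge 3$ is the heart of the argument: I claim $P_\mathbf{a}=D_\mathbf{a}$, and since $D$ is a monomial ideal it suffices to show that every monomial of $D_\mathbf{a}$ lies in $P$. The key identity is
\[
x_i(x_ky_j-x_jy_k)\;=\;(x_iy_j+x_jy_i)\,x_k\;-\;(x_iy_k+x_ky_i)\,x_j\;\in\;P
\]
for distinct $i,j,k$. Combining with $x_i(x_ky_j+x_jy_k)\in P$ yields $x_ix_ky_j\in P$, and the $x\leftrightarrow y$ symmetry yields $y_iy_kx_j\in P$. Any $m\in D_\mathbf{a}$ has an $x$-factor at some $i$ and a $y$-factor at some $j$ with $i\ne j$; choosing a third index $k\in\supp\mathbf{a}\setminus\{i,j\}$, the monomial $m$ is divisible by $x_ix_ky_j$ or by $x_iy_jy_k$, and therefore lies in $P$.

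The main obstacle is the case $|\supp\mathbf{a}|\ge 3$ for $P$, and within it the algebraic identity above that places $x_ix_ky_j$ into $P$; once this is known, the remainder is routine counting. I would close by verifying the additivity relations $\HF_\mathbf{a}(\mf{m}_{xy})=\HF_\mathbf{a}(D)+\HF_\mathbf{a}(C_1)$ and $\HF_\mathbf{a}(D)=\HF_\mathbf{a}(P)+\HF_\mathbf{a}(C_2)$ by case analysis on $|\supp\mathbf{a}|$, which is immediate from the formulas and yields the exactness of sequences (1) and (2).
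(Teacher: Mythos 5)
Your proof is correct, and the formulas for $\mf{m}_{xy}$, $D$, and $C_1$ are computed essentially as in the paper (counting monomials / complements, and shifting by the degree of the generator). The two places you diverge are $C_2$ and, more substantively, $P$. For $C_2$ you argue via the non-zero-divisor property of $x_iy_j+x_jy_i$ and a shift, which is equivalent to the paper's count but slightly cleaner: the paper instead observes that $\overline{x_iy_j-x_jy_i}$ generates the same ideal as the monomial $2\overline{x_iy_j}$ in the quotient and counts the surviving monomials directly. For $P$ the paper takes a shortcut: it invokes the known Gr\"obner basis of the permanental ideal under the antidiagonal term order (citing a theorem that $\init(P)$ is generated by $x_iy_j$, $x_iy_jy_k$, $x_ix_jy_k$) and then counts standard monomials, whereas you give a self-contained argument, proving $P_{\mathbf a}=D_{\mathbf a}$ for $|\supp\mathbf a|\ge 3$ via the identity
\[
x_i(x_ky_j-x_jy_k)=(x_iy_j+x_jy_i)x_k-(x_iy_k+x_ky_i)x_j\in P
\]
combined with $x_i(x_ky_j+x_jy_k)\in P$ to place $x_ix_ky_j$ (and by symmetry $x_iy_jy_k$) into $P$, and then checking every monomial of $D_\mathbf a$ is divisible by one of these. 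Your route is longer but avoids the external Gr\"obner-basis citation, and in fact your intermediate conclusion that $x_ix_jy_k, x_iy_jy_k\in P$ is exactly the content the paper imports via $\init(P)$. Both arguments are valid; the paper's is more economical given the literature, yours is more elementary and self-contained, and your closing observation that additivity of the Hilbert functions verifies exactness of (1) and (2) matches the paper's stated strategy.
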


\begin{proof}
    
Note that $\HF_{\mathbf{a}}(S) = \prod_i (a_i + 1)$. Now $\mf{m}_{xy}$ is generated by all monomials in $S$ of degree 2 that are a product of $x$'s and $y$'s, so the only monomials not in $(\mf{m}_{xy})_{\bf a}$ are $x_1^{a_1} \dots x_n^{a_n}$ and $y_1^{a_1} \dots y_n^{a_n}$. $D$ is generated by all monomials of degree 2 with support $\geq 2$ and are a product of $x$'s and $y$'s, so the only monomials not in $D_{\mathbf{a}}$ are again $x_1^{a_1} \dots x_n^{a_n}$ and $y_1^{a_1} \dots y_n^{a_n}$. A monomial in $S^{(i)} \overline{x_i y_i}$ of degree $d$ comes from multiplying $\overline{x_i y_i}$ by a monomial in $S^{(i)}$ of degree $d-2$, of which there are $d-1$. So, the first three Hilbert functions are done.
\par
The Hilbert function of $P$ is the same as that of its initial ideal $\init(P)$ under any monomial order. By \cite[Thm. 3.1]{MR1777172}, under the antidiagonal order, $\init(P)$ is generated by the monomials
\[
\begin{cases}
          x_i y_j\quad & i<j \\
          x_i y_j y_k \quad & i<j<k \\
          x_i x_j y_k \quad & i<j<k \\
     \end{cases}
\]
The only generators of $\init(P)$ with multidegree of support $2$ are $x_i y_j$ for $i \neq j$. So, an arbitrary monomial of multidegree $a_i \mathbf e_i + a_j \mathbf e_j$ is of the form $x_i^{\alpha_i} y_i^{a_i - \alpha_i} x_j^{\alpha_j} y_j^{a_j - \alpha_j}$ where $\alpha_i = 1,\dots,a_i$ and $\alpha_j = 0,\dots,a_j - 1$, for a total of $a_i a_j$. For a monomial with multidegree $\mathbf{a}$ of support at least three, consider three indices $i < j < k$ in the support. For such a monomial to not be in $\init(P)_{\mathbf a}$, either all $i,j,k$ correspond to $x$'s or all correspond to $y's$. So, the only monomials of multidegree $\mathbf{a}$ not in $\init(P)_{\mathbf{a}}$ are those with all $x$'s or $y$'s.
\par
In $\frac{S^{(i,j)}}{(x_i y_j + x_j y_i)}$, the ideal generated by $\overline{x_i y_j - x_j y_i}$ is the same as that generated by $2\overline{x_i y_j}$, a monomial. So, $\frac{S^{(i,j)}}{(x_i y_j + x_j y_i)} \overline{x_i y_j - x_j y_i}$ in multidegree $\mathbf a$ has a basis of equivalence classes of monomials of the form $x_i^{a_i} x_j^{\alpha_j} y_j^{a_j - \alpha_j}, x_i^{\alpha_i} y_i^{a_i - \alpha_i} y_j^{a_j}$ for $\alpha_j = 0,\dots,a_j$ and $\alpha_i = 1,\dots,a_i$, for a total of $a_i + a_j - 1$.
\end{proof}

\subsection{Equivariant Syzygies of $D$}

Now that we have our short exact sequences, we apply $\Ext(-,\CC)$ to (1) to determine $\Ext(D,\CC)$. For convenience, we will always let $\mathbf{d}$ denote a multidegree of the form $\mathbf{a} \times \mathbf{b} = (2^a,1^b,0^{n-a-b}) \times (a+b-c,a+c)$ for $a,b,c \geq 0$, $a + b \leq n$, and $c \leq \lfloor \frac{b}{2} \rfloor$. We will add assumptions on $a,b,$ and $c$ as needed. We will also omit the trivial representation on the $0$ portion of $\mathbf{a}$.
\begin{proposition}
\label{x_iy_i}
For $a \geq 1$, the following is an isomorphism of $G_n \times G_2$ representations:
\[
\Ext^{2a + b}(\oplus S^{(i)} \overline{x_i y_i},\CC)_{\langle \mathbf{d} \rangle} \cong 
\begin{cases}
(\Ind([1],[a-1]),\Ind([1^{b-c}], [1^{c}])_{\langle \mathbf{a} \rangle} \boxtimes ([1],[1])_{\langle {\mathbf{b}} \rangle} \text{ if }c \neq \frac{b}{2} \\
\sum_{j = 0}^c(\Ind([1],[a-1]), [2^{c-j},1^{2j}])_{\langle \mathbf{a} \rangle }  \boxtimes ([1^2]^{a-1+c-j})_{\langle \mathbf{b} \rangle} \text{ if } c = \frac{b}{2}
\end{cases}
\]
\end{proposition}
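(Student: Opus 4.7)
The plan is to compute $\Ext$ of each cyclic summand via a Koszul resolution and then assemble the $G_n \times G_2$-representation at each multidegree using Proposition~\ref{A}.

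First, each $S^{(i)}\overline{x_iy_i}$ is a cyclic $S$-module whose annihilator is generated by the regular sequence of linear forms $\{x_j, y_j : j \neq i\}$, shifted so that its generator sits in multidegree $2\mathbf{e}_i \times (1,1)$. Its minimal free resolution is therefore the Koszul complex on this regular sequence. Applying $\Hom_S(-,\CC)$ kills all differentials (their entries lie in the maximal ideal), yielding the $A$-module isomorphism
\[
\Ext^\bullet_S\bigl(S^{(i)}\overline{x_iy_i},\CC\bigr) \;\cong\; A/(e_i,f_i)\,\overline{x_iy_i},
\]
in which $e_j, f_j$ for $j \neq i$ act by exterior multiplication and $e_i, f_i$ act as zero. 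Taking the direct sum gives $\Ext^\bullet_S\bigl(\bigoplus_i S^{(i)}\overline{x_iy_i},\CC\bigr) \cong \bigoplus_i A/(e_i,f_i)\,\overline{x_iy_i}$.

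Next, fix a representative $\mathbf{d}=(2^a,1^b,0^{n-a-b}) \times (a+b-c,a+c)$ with the 2-positions equal to $\{1,\ldots,a\}$. A summand indexed by $i$ contributes in multidegree $\mathbf{d}$ only when $d_i \geq 2$, so only for $i \in \{1,\ldots,a\}$; for such $i$ the piece $(A/(e_i,f_i)\,\overline{x_iy_i})_{\mathbf{d}}$ coincides with $A_{\mathbf{d}-2\mathbf{e}_i \times (1,1)}$ since the shifted multidegree is $0$ at position $i$. A basis element $e_{j_1}\cdots f_{k_q}\overline{x_iy_i}$ then has prefix wedge length $2(a-1)+b$, which together with the paper's convention of placing the generator $\overline{x_iy_i}$ at cohomological degree $2$ gives the claimed total cohomological degree $2a+b$. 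Applying Proposition~\ref{A} to the shifted multidegree (with parameters $a''=a-1$, $b''=b$, $c''=c$, noting that $c=b/2 \iff c''=b''/2$, so the case analysis carries over) produces the $(\mf S_{a-1} \times \mf S_b \times \mf S_{n-a-b}) \times \mf S_2$-representation on each summand. Summing over $i$ and applying Frobenius reciprocity along $\mf S_1 \times \mf S_{a-1} \hookrightarrow \mf S_a$ promotes the trivial factor $[a-1]$ on the remaining 2-positions to $\Ind([1],[a-1])$, while the factors $\Ind([1^{b-c}],[1^c])$ on the 1-positions and $[n-a-b]$ on the 0-positions are unchanged. Finally, the $(1,1)$-shift in the $G_2$-grading is $\mf S_2$-symmetric and trivially represented (since $x_iy_i=y_ix_i$), so the $\mf S_2$-character is preserved while the underlying multidegree translates from $(a+b-c-1,a+c-1)$ to $(a+b-c,a+c)$.

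The work is largely bookkeeping once Proposition~\ref{A} is in hand; the most delicate point is the $c=b/2$ case, where one must match the sum of Schur components term by term and verify that the $G_2$-shift by $(1,1)$ leaves each sign power $[1^2]^{a-1+c-j}$ intact and simply translates $\chi_{(a+c-1)^2}$ to $\chi_{(a+c)^2}$.
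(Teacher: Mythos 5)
Your proof takes the same route as the paper's: identify $\Ext(S^{(i)}\overline{x_iy_i},\CC) \cong A/(e_i,f_i)\,\overline{x_iy_i}$ via the Koszul resolution of the regular sequence $\{x_j,y_j : j \neq i\}$, observe that at a representative multidegree $\mathbf{d}=(2^a,1^b,0^{n-a-b})\times(a+b-c,a+c)$ only the summands with $i \in \{1,\dots,a\}$ contribute, apply Proposition~\ref{A} to the $(n-1)$-variable exterior algebra at the shifted multidegree $\mathbf{d}-2\mathbf{e}_i\times(1,1)$, and then induce the $\mf S_{a-1}$-trivial piece up to $\mf S_a$ to obtain the factor $\Ind([1],[a-1])$. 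Your version spells out several details the paper compresses (the Koszul argument, the parameter substitution $a''=a-1$, the invariance of the $\mf S_2$-character under the $(1,1)$-shift because $x_iy_i$ is symmetric), all of which are correct. One small point: your phrase ``the paper's convention of placing the generator $\overline{x_iy_i}$ at cohomological degree~$2$'' is not a convention the paper states, and as usually understood a module generator sits at $\Ext^0$, not $\Ext^2$; the superscript $2a+b$ in the proposition is really the total internal degree of $\mathbf d$ (the honest cohomological degree is $2a+b-2$, i.e.\ the wedge length $2(a-1)+b$, as one can confirm against the long exact sequence in the proof of Proposition~\ref{D}). This is a labeling quirk of the paper rather than a substantive issue, and the representation-theoretic content of your argument is unaffected.
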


\begin{proof}
$S^{(i)} \overline{x_i y_i}$ is $G_{n-1} \times G_2$-invariant with $\mf S_{n-1}$ permuting all indices except $i$, and $\mf S_2$ acting trivially on the basis element $\overline{x_i y_i}$. $\Ext(S^{(i)} \overline{x_i y_i},\CC) \cong A^{(i)} \overline{x_i y_i}$, where $A^{(i)}$ is the $A$-module $A/(e_i,f_i)$ (we are abusing notation here, treating $\overline{x_i y_i}$ as just a $\CC$-vector with the same multigraded and symmetric group representations as $x_i y_i \in S$). Therefore, a basis element in $A^{(i)} \overline{x_i y_i}$ of multidegree $\mathbf{d}$ is of the form $\overline{x_i y_i}$ times a basis element of $A^{(i)}$ of multidegree $\mathbf{d} - (2\mathbf{e}_i) \times (1,1)$ so long as $i = 1,\dots,a$. To get the $G_n \times G_2$-representation of the direct sum  $\Ext(\oplus S^{(i)} \overline{x_i y_i},\CC) \cong \oplus A^{(i)} \overline{x_i y_i}$, then, we simply induce up on the $2^a$ portion of $\mathbf{d}$ from the corresponding $G_{n-1} \times G_2$-representations of $A^{(i)}$, which achieves the desired result.
\end{proof}

\begin{proposition}
\label{mef}
$\Ext^{2a+b}(\mf m_{xy},\CC)_{\langle \mathbf{d} \rangle} \cong (e_1,\dots,e_n)(f_1,\dots,f_n)_{\langle \mathbf{d} \rangle} \otimes_{\CC} ([n])_{(0^n)} \boxtimes ([1^2])_{(0^2)} =: (\mf m_{ef}^{-})_{\langle \mathbf{d} \rangle}$ as a submodule of the free $A$-module $A \otimes_{\CC} ([n])_{(0^n)} \boxtimes ([1^2])_{(0^2)} =: A^{-}$. In particular, for $\mathbf{d}$ with $2a + b \geq 2$ and if $a = 0$ then $c \geq 1$, we have the following isomorphism of $G_n \times G_2$-representations.
\[
\Ext^{2a + b}(\mf m_{xy},\CC)_{\langle \mathbf{d} \rangle} \cong \begin{cases}
([a],\Ind([1^{b-c}], [1^{c}]))_{\langle \mathbf{a} \rangle} \boxtimes ([1],[1])_{\langle \mathbf{b} \rangle} \text{ if }c \neq \frac{b}{2} \\
\sum_{j = 0}^c([a], [2^{c-j},1^{2j}])_{\langle \mathbf{a} \rangle }  \boxtimes ([1^2]^{a+1+c-j})_{\langle \mathbf{b} \rangle} \text{ if } c = \frac{b}{2}
\end{cases}
\]
\end{proposition}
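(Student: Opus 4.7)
My plan is to identify $\Ext_S^\bullet(\mf m_{xy}, \CC)$ with $\mf m_{ef}^-$ as graded $A$-modules via the Künneth structure of $\mf m_{xy}$, and then apply Proposition \ref{A} to extract the explicit representation in each multidegree. Writing $S = \CC[x] \otimes_\CC \CC[y]$, we have $\mf m_{xy} = \mf m_x^V \otimes_\CC \mf m_x^W$ as $S$-modules. Each factor admits a truncated Koszul resolution (with $i$-th term $\CC[x](-i-1) \otimes \wedge^{i+1} V$ for $\mf m_x^V$, similarly for $W$), so by Künneth the minimal free resolution of $\mf m_{xy}$ over $S$ has $k$-th term $F_k = \bigoplus_{i+j=k} S(-k-2) \otimes \wedge^{i+1} V \otimes \wedge^{j+1} W$. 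Dualizing and using the natural isomorphism $\wedge V^* \otimes \wedge W^* \cong A$ produces an $A$-module isomorphism $\Ext^\bullet(\mf m_{xy}, \CC) \cong (e_1, \ldots, e_n)(f_1, \ldots, f_n) \subset A$, sending $(x_i y_j)^* \mapsto e_i f_j$ on generators.

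The next task is to check $\mf S_m = \mf S_2$-equivariance. The identification is clearly $G_n$-equivariant, but for $\tau$ one has $\tau((x_i y_j)^*) = (x_j y_i)^*$ while $\tau(e_i f_j) = f_i e_j = -e_j f_i$ in $A$. Tracking the Koszul signs through the Künneth isomorphism and the embedding $\wedge V^* \otimes \wedge W^* \hookrightarrow A$ shows that on the internal-degree-$(k+2)$ piece the two $\tau$-actions differ by $(-1)^{k+1}$. For the multidegrees considered ($k+2 = 2a+b$), this sign is $(-1)^{b-1}$, which equals $-1$ whenever $b$ is even---precisely the case $c = b/2$, where the $\mf S_2$-stabilizer of $\mathbf b = (a+b-c, a+c)$ is nontrivial and the sign actually affects the representation. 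In the complementary case $c \neq b/2$, the stabilizer is trivial and the induced $G_2$-representation at $\mathbf b$ is the regular representation of $\mf S_2$, invariant under sign twist. Thus the global twist by $([n])_{(0^n)} \boxtimes ([1^2])_{(0^2)}$ makes the identification $\mf S_2$-equivariant, yielding $\Ext^\bullet(\mf m_{xy}, \CC) \cong \mf m_{ef}^-$.

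For the explicit formula, I would observe that under the stated hypothesis $(e_1, \ldots, e_n)(f_1, \ldots, f_n)_{\langle \mathbf d\rangle} = A_{\langle \mathbf d\rangle}$: both the $e$-degree $a+b-c$ and the $f$-degree $a+c$ are positive (automatic when $a \geq 1$; when $a = 0$, the constraints $c \geq 1$, $c \leq b/2$, and $b \geq 2$ force $b-c \geq 1$). Hence $(\mf m_{ef}^-)_{\langle \mathbf d\rangle} = A_{\langle \mathbf d\rangle} \otimes [1^2]_{\mf S_2}$, and the formula follows by applying Proposition \ref{A} together with the sign twist: when $c \neq b/2$, the $G_2$-factor $([1],[1])_{\langle \mathbf b\rangle}$ is the regular representation and is unchanged by the twist; when $c = b/2$, the $G_2$-factor $([1^2]^{a+c-j})_{\langle \mathbf b\rangle}$ in summand $j$ has its exponent raised by one to $a+1+c-j$, matching the stated formula. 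The main obstacle is the parity bookkeeping in the second step---verifying that the Koszul signs really do conspire to produce exactly the global twist $[1^2]_{\mf S_2}$ in the relevant multidegrees, rather than a sign varying with bidegree.
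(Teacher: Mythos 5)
Your approach is essentially the paper's: Künneth to identify $\Ext(\mf m_{xy},\CC)$ with $(e_1,\dots,e_n)(f_1,\dots,f_n)$ as $A$-modules, check $\mf S_2$-equivariance up to a twist, then apply Proposition \ref{A}. However, your claimed sign $(-1)^{k+1}$ is not correct, and the degree-by-degree sign tracking is both unnecessary and a source of error. The cleaner point, which the paper exploits, is that once the $A$-module isomorphism $\phi$ is fixed and one knows that the $\mf S_2$-action commutes with the $A$-action (i.e.\ $\tau(a\cdot v) = (\tau a)(\tau v)$ on both sides), equivariance need only be checked on the degree-$0$ $A$-module generators, and then $A$-linearity forces it everywhere: $\phi(\tau(ag)) = (\tau a)\phi(\tau g) = (\tau a)\tau\phi(g) = \tau\phi(ag)$. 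On generators the discrepancy is exactly $-1$ (e.g.\ $\tau$ sends the dual of $x_iy_j$ to the dual of $x_jy_i$, mapping to $e_jf_i$, while $\tau(e_if_j) = f_ie_j = -e_jf_i$), so the twist by $[1^2]$ is uniform across all homological degrees. Your $(-1)^{k+1}$ happens to equal $-1$ exactly when $c = b/2$ (which forces $k$ even), so your final representation formula agrees with the paper's, but this is a coincidence of the hypothesis rather than a vindication of the sign; if a degree with $c = b/2$ and odd $k$ existed, your formula would produce a wrong answer. The remedy is to drop the degree-dependent sign claim entirely and replace it with the generator argument above, which is also where ``the main obstacle'' you flagged dissolves. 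The rest of the proposal (the reduction $(e)(f)_{\langle\mathbf d\rangle} \cong A_{\langle\mathbf d\rangle}$ under the stated hypotheses, and the application of Proposition \ref{A}) is correct and matches the paper.
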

\begin{proof}
First, as $\CC$-vector spaces, 
\begin{align*}
\Ext_S((x_1,\dots,x_n)(y_1,\dots,y_n),\CC) &\cong \Ext_{\CC[x_1,\dots,x_n]}((x_1,\dots,x_n),\CC) \otimes_{\CC} \Ext_{\CC[y_1,\dots,y_n]}((y_1,\dots,y_n),\CC) \\
 & \cong \bigwedge^{\geq 1} (V \otimes \Span_{\CC}\{w_1\})^* \otimes_{\mathbb{C}} \bigwedge^{\geq 1} (V \otimes \Span_{\CC}\{w_2\})^*
\end{align*}
 As an $A$-module, $\Ext(\mf m_{xy},\CC)$ is generated by $e_{i} \otimes f_{j}$ with $A$-module action
$e_i(e_j \otimes f_k) = e_i e_j \otimes f_k$ and $f_i(e_j \otimes f_k) = e_j \otimes f_i f_k$. The $\mf S_n$-action permutes indices while the $\mf S_2$-action swaps indices across the tensor, i.e. sends $e_{i} \otimes f_{j}$ to $e_{j} \otimes f_{i}$. $\Ext(\mf m_{xy},\CC)$ as an $A$ module is isomorphic to the ideal $(e_1,\dots,e_n)(f_1,\dots,f_n)$ of $A$ by sending $e_i \otimes f_j$ to $e_i f_j$. Tensoring $A$ with $([n])_{(0^n)} \boxtimes ([1^2])_{(0^2)}$ makes this map $G_n \times G_2$-equivariant, which we can check on the generators. The permutation swapping $e$'s and $f's$ acts on  $e_i \otimes f_i$ trivially, while it negates $e_i f_i$ in $A$. This permutation acting on $e_i \otimes f_j$ becomes $e_j \otimes f_i$, whose image in $A$ is $e_j f_i$, but when it acts on $e_i f_j$ it becomes $f_i e_j = -e_j f_i$. Adding the $[1^2]$ term for the $\mf S_2$-action fixes the sign.
\par
To get the representations in the conclusion, notice that for such $\mathbf{d}$, $(e_1,\dots,e_n)(f_1,\dots,f_n)_{\langle \mathbf{d} \rangle } \cong A_{\langle \mathbf{d} \rangle}$.
\end{proof}

Under the isomorphisms in the previous two propositions, $\widetilde \varphi = \Ext(\varphi,\CC)$, a $G_n \times G_2$-equivariant morphism of $A$-modules, sends $\overline{x_i y_i}$ to $e_i f_i$. Applying $\Ext(-,\CC)$ to (1) gives the long exact sequence
\[
\dots \to \oplus_i A^{(i)}_p \overline{x_i y_i} \xrightarrow{\widetilde{\varphi}^p}  (\mf{m}_{ef}^-)_p \to \Ext^p (D,\CC) \to \oplus_i A^{(i)}_{p+1} \overline{x_i y_i} \xrightarrow{\widetilde{\varphi}^{p+1}} \dots
\]

\begin{proposition}
\label{D}
    The following are isomorphisms of $G_n \times G_2$-representations:

    \begin{itemize}
    \item[(i)] For $\mathbf{d}$ with $a=0$, $b = p+2$ and $c \geq 1$,
    \[
    \Ext^p(D,\CC)_{\langle \mathbf{d} \rangle} \cong A_{\langle \mathbf{d} \rangle}^- \cong
    \begin{cases} (\Ind([1^{b-c}],[1^{c}])_{\langle \mathbf{a} \rangle} \boxtimes ([1],[1])_{\langle \mathbf{b} \rangle} \text{ if } c \neq \frac{b}{2}\\
    \sum_{j=0}^{c} ([2^{c - j},1^{2j}])_{\langle \mathbf{a} \rangle} \boxtimes ([1^2]^{c+1-j})_{\langle \mathbf{b} \rangle} \text{ if }c = \frac{b}{2}
    \end{cases}
    \]
    yielding graded Betti numbers
    \[
    \dim_\CC \Ext^p(D,\CC)_{p+2} = \binom{n}{p+2} (2^{p+2} - 2)
    \]
    \item[(ii)] For $\mathbf{d}$ with $a \geq 2$, $b \geq 0$, $2a + b = p+3$,
    \[
    \Ext^p(D,\CC)_{\langle \mathbf{d} \rangle} \cong (\oplus_j A^{(j)} \overline{x_j y_j})_{\langle \mathbf{d} \rangle } - A^-_{\langle \mathbf{d} \rangle} \cong
    \]
    \[
    \begin{cases}
        ([a-1,1],\Ind([1^{b-c}],[1^c]))_{\langle \mathbf{a} \rangle} \boxtimes ([1],[1])_{\langle \mathbf{b} \rangle} \text{ if } c \neq \frac{b}{2} \\
        \sum_{j=0}^{c} ([a-1,1],[2^{c - j},1^{2j}])_{\langle \mathbf{a} \rangle} \boxtimes ([1^2]^{a-1+c - j})_{\langle \mathbf{b} \rangle} \text{ if } c = \frac{b}{2}
    \end{cases}
    \]
    yielding graded Betti numbers
    \[
    \dim_\CC \Ext^p(D,\CC)_{p+3} = n \binom{2n-2}{p+1} + \binom{n}{p+3}2^{p+3} - \binom{2n}{p+3}
    \]
    \end{itemize}
    and $\Ext^p(D,\CC)_{\langle \mathbf{d} \rangle} = 0$ for all other $\mathbf{d}$.
\end{proposition}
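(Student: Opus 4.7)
The plan is to apply $\Ext(-, \CC)$ to the short exact sequence $(1)$ and analyze the resulting long exact sequence one multidegree at a time, using Propositions \ref{x_iy_i} and \ref{mef} to identify its terms. Under those identifications, $\widetilde\varphi := \Ext(\varphi, \CC)$ is the $G_n \times G_2$-equivariant $A$-module map $\bigoplus_i A^{(i)} \overline{x_i y_i} \to \mf m_{ef}^-$ sending $\overline{x_i y_i} \mapsto e_i f_i$. Since $C_1$ and $\mf m_{xy}$ have linear resolutions generated in degree $2$, their $\Ext$-modules concentrate in a single cohomological degree within each multidegree $\mathbf d$ with $q := |\mathbf d| = 2a + b$, so the long exact sequence restricted to $\mathbf d$ collapses to
\[
0 \to \Ext^{q-3}(D, \CC)_{\langle \mathbf d \rangle} \to \bigl(\bigoplus_i A^{(i)} \overline{x_i y_i}\bigr)_{\langle \mathbf d \rangle} \xrightarrow{\widetilde \varphi} A^-_{\langle \mathbf d \rangle} \to \Ext^{q-2}(D, \CC)_{\langle \mathbf d \rangle} \to 0,
\]
and $\Ext^p(D, \CC)_{\langle \mathbf d \rangle}$ vanishes for $p \notin \{q - 3, q - 2\}$.

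The central combinatorial observation is that for every index $i$ with $\mathbf a_i = 2$, multiplication by $e_i f_i$ defines a $\CC$-linear bijection $A^{(i)}_{\mathbf d - 2\mathbf e_i \times (1,1)} \to A_{\mathbf d}$: every monomial of multidegree $\mathbf d$ in $A$ must contain both $e_i$ and $f_i$ because position $i$ carries multiplicity $2$, and $A^{(i)}$ has no $e_i$ or $f_i$ to cancel. This produces a trichotomy in $\widetilde\varphi$ indexed by the number $a$ of ``$2$''-positions of $\mathbf a$. When $a = 0$, the source vanishes, so $\widetilde\varphi = 0$ and $\Ext^{q-2}(D, \CC)_{\langle \mathbf d \rangle} \cong A^-_{\langle \mathbf d \rangle}$ (with $c \geq 1$ forced by Proposition \ref{mef} for non-vanishing), giving Case (i) at $p = q - 2 = b - 2$. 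When $a = 1$, the unique contributing summand maps bijectively onto $A_{\mathbf d}$, so $\widetilde\varphi$ is an isomorphism and $\Ext^p(D, \CC)_{\langle \mathbf d \rangle} = 0$ for all $p$. When $a \geq 2$, each single summand already surjects onto $A_{\mathbf d}$, hence $\widetilde\varphi$ is surjective but not injective, yielding $\Ext^{q-3}(D, \CC)_{\langle \mathbf d \rangle} = \ker \widetilde\varphi$ which equals $\bigl(\bigoplus_j A^{(j)} \overline{x_j y_j}\bigr)_{\langle \mathbf d \rangle} - A^-_{\langle \mathbf d \rangle}$ in the Grothendieck ring of $G_n \times G_2$-representations. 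Subtracting the formulas in Propositions \ref{x_iy_i} and \ref{mef} term by term, the $\mathbf b$-factor and the $[n-a-b]$ slot agree (they differ only by an even power of $[1^2]$ in the $c = b/2$ case, so agree as $\mf S_2$-representations), and in the $\mathbf a$-slot we get $\Ind([1], [a-1]) - [a]$; Pieri's rule gives $\Ind([1], [a-1]) = [a] + [a-1, 1]$, so this virtual difference is genuinely the effective hook $[a-1, 1]$, producing Case (ii) at $p = q - 3 = 2a + b - 3$ in both the $c \neq b/2$ and $c = b/2$ subcases.

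The graded Betti numbers then follow from direct monomial counts. In Case (i), each of the $\binom{n}{p+2}$ $\mf S_n$-orbits of $\mathbf a' = (1^{p+2}, 0^{n-p-2})$ contributes $2^{p+2} - 2$ monomials to $\mf m_{ef}^-$: there are $2^{p+2}$ multidegree-$\mathbf a'$ monomials in $A$, and exactly two (all-$e$ and all-$f$) lie outside $\mf m_{ef}$. In Case (ii), $\sum_j \dim (A^{(j)} \overline{x_j y_j})_{p+3} = n \binom{2n-2}{p+1}$ because each $A^{(j)}$ is an exterior algebra on $2n-2$ generators, while $\dim A^-_{p+3}$ restricted to multidegrees with $a \geq 2$ is $\binom{2n}{p+3} - \binom{n}{p+3} 2^{p+3} - n \binom{n-1}{p+1} 2^{p+1}$ by inclusion-exclusion on the number of ``$2$''-pairs in each monomial; the $a = 1$ contributions cancel from source and target, yielding the stated $n\binom{2n-2}{p+1} + \binom{n}{p+3} 2^{p+3} - \binom{2n}{p+3}$. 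The main obstacle is checking that the virtual difference in Case (ii) is actually an effective representation in every multidegree, especially in the $c = b/2$ case where the Schur functor decomposition from Proposition \ref{A} contributes many $j$-summands; the uniform cancellation $\Ind([1], [a-1]) - [a] = [a-1, 1]$ via Pieri's rule is what makes the answer clean across all of them.
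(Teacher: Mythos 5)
Your proof is correct and takes essentially the same approach as the paper: apply $\Ext(-,\CC)$ to sequence (1), observe that both endpoints have linear resolutions so the long exact sequence collapses multidegree-by-multidegree, and then classify $\widetilde\varphi_{\mathbf d}$ by the number $a$ of doubled indices (zero map when $a=0$, isomorphism when $a=1$, surjection when $a\geq 2$, with surjectivity flowing from the observation that $e_if_i\cdot A^{(i)}_{\mathbf d - 2\mathbf e_i\times(1,1)} = A_{\mathbf d}$ for any $i$ carrying a $2$). Your write-up makes explicit a couple of points the paper leaves implicit — notably why $\widetilde\varphi$ is surjective for $a\geq 1$ and why the virtual difference $\Ind([1],[a-1]) - [a] = [a-1,1]$ is effective — but the decomposition, the key lemma, and the Betti-number count (source minus non-squarefree monomials of $A$) all match the paper's argument.
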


\begin{proof}
  In multidegree $\mathbf{d}$ as in (i), the source of $\widetilde \varphi_{\mathbf{d}}^p$ is 0, while the target is non-zero, so its cokernel is isomorphic to $(\mf{m}^-_{ef})_{\mathbf{d}}  \cong A^-_{\mathbf{d}}$. The graded Betti numbers are then obtained by counting the squarefree monomials of degree $p+2$ in $A$ with at least one $e$ and $f$.
  \par 
  Multidegrees $\mathbf{d}$ with $a \geq 1, b \geq 0,$ and $2a + b = p+3$ are the only ones where source or target of $\widetilde \varphi^{p+1}_{\mathbf{d}}$ is non-zero. Now $\widetilde \varphi$ maps surjectively onto the $e_i f_i$, so it is surjective on all multidegrees with $a \geq 1$. Note for future reference that its kernel is generated by $e_j f_j \overline{x_i y_i} - e_i f_i \overline{x_j y_j}$ for $1 \leq i < j \leq a$. By examining the representations in the previous two propositions, we see that $(\oplus A^{(i)} \overline{x_i y_i})_{\langle \mathbf{d} \rangle} \cong A^-_{\langle \mathbf{d}\rangle }$ when $a=1$. The graded Betti numbers are obtained by counting all terms of degree $p+3$ in $\oplus A^{(i)} \overline{x_i y_i}$ and subtracting all non-squarefree monomials of degree $p+3$ of $A$.
  \par
  Since $\coker \widetilde \varphi^p$ and $\ker \widetilde \varphi^{p+1}$ are non-zero in distinct multidegrees, we must have
$\Ext^p(D,\CC) \cong \coker \widetilde \varphi^p \oplus \ker \widetilde \varphi^{p + 1}$. This concludes the proof.
\end{proof}

\subsection{Equivariant Syzygies of $P$}

Next, we apply $\Ext(-,\CC)$ to sequence (2). Here,
\[
\Ext^p(\frac{S^{(i,j)}}{(x_i y_j + x_j y_i)} \overline{x_i y_j - x_j y_i},\CC) \cong \Ext^p(S^{(i,j)} \overline{x_i y_j - x_j y_i},\CC) \oplus \Ext^{p-1}(S^{(i,j)} \overline{x_i^2 y_j^2 - x_j^2 y_i^2},\CC) 
\]
\[
\cong A_p^{(i,j)} \overline{x_i y_j - x_j y_i} \oplus A_{p-1}^{(i,j)} \overline{x_i^2 y_j^2 - x_j^2 y_i^2}
\]
where $A^{(i,j)} = A/(e_i,f_i,e_j,f_j)$ and, again, we consider $\overline{x_i y_j - x_j y_i}$ and $\overline{x_i^2 y_j^2 - x_j^2 y_i^2}$ to be vectors on which $\mf S_n \times \mf S_2$ acts as their corresponding elements do in $S$. 
\begin{proposition}
\label{det}
For $\mathbf{d}$ with $b \geq 2$, $c \geq 1$, we have an isomorphism of $G_n\times G_2$-representations
\[
(\oplus A^{(i,j)} \overline{x_i y_j - x_j y_i})_{\langle \mathbf{d} \rangle} \cong 
\begin{cases}
([a],\Ind([1^2],[1^{b-1-c}], [1^{c-1}]))_{\langle \mathbf{a} \rangle} \boxtimes ([1],[1])_{\langle \mathbf{b} \rangle} \text{ if }c \neq \frac{b}{2} \\
\sum_{j = 0}^{c-1}([a], \Ind([1^2],[2^{c-1-j},1^{2j}]))_{\langle \mathbf{a} \rangle }  \boxtimes ([1^2]^{a+c-j})_{\langle \mathbf{b} \rangle} \text{ if } c = \frac{b}{2}
\end{cases}
\]
For $\mathbf{d}$ with $a \geq 2$,
\[
(\oplus A^{(i,j)} \overline{x_i^2 y_j^2 - x_j^2 y_i^2})_{\langle \mathbf{d} \rangle} \cong 
\begin{cases}
(\Ind([1^2],[a-2]),\Ind([1^{b-c}], [1^{c}]))_{\langle \mathbf{a} \rangle} \boxtimes ([1],[1])_{\langle \mathbf{b} \rangle} \text{ if }c \neq \frac{b}{2} \\
\sum_{j = 0}^c(\Ind([1^2],[a-2]), [2^{c-j},1^{2j}])_{\langle \mathbf{a} \rangle }  \boxtimes ([1^2]^{a-1+c-j})_{\langle \mathbf{b} \rangle} \text{ if } c = \frac{b}{2}
\end{cases}
\]
\begin{proof}
$\overline{x_i y_j - x_j y_i}$ has representation $([1^2])_{\mathbf{e}_i + \mathbf{e}_j} \boxtimes ([1^2])_{(1,1)}$, and $\overline{x_i^2 y_j^2 - x_j^2 y_i^2}$ has representation $([1^2])_{2\mathbf{e}_i + 2\mathbf{e}_j}$ $ \boxtimes ([1^2])_{(2,2)}$, and the representations above are the induced $G_{n-2} \times G_2$ representations corresponding to the $\mathbf{d} - (\mathbf{e}_i + \mathbf{e}_j)\times (1,1)$ and $\mathbf{d} - (2\mathbf{e}_i + 2\mathbf{e}_j)\times (2,2)$ portions of $A^{(i,j)}$, respectively.
\end{proof}

\end{proposition}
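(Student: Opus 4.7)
The plan is to analyze each summand $A^{(i,j)}\overline{x_iy_j-x_jy_i}$ (and likewise $A^{(i,j)}\overline{x_i^2y_j^2-x_j^2y_i^2}$) as a twisted shifted copy of the smaller exterior algebra $A^{(i,j)}$, apply Proposition \ref{A} to that smaller algebra, and then induce up over the choice of pair $\{i,j\}$.

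For the first statement, I would fix $i<j$ and identify $A^{(i,j)}=A/(e_i,f_i,e_j,f_j)$ with the exterior algebra on the $2(n-2)$ generators $\{e_k,f_k:k\neq i,j\}$. For $\mathbf{d}=(2^a,1^b,0^{n-a-b})\times(a+b-c,a+c)$ with $b\ge 2$, $c\ge 1$, the summand indexed by $\{i,j\}$ contributes to multidegree $\mathbf{d}$ only when $a_i=a_j=1$, and in that case its multidegree-$\mathbf{d}$ piece is canonically isomorphic to the multidegree-$(\mathbf{d}-(\mathbf{e}_i+\mathbf{e}_j)\times(1,1))$ piece of $A^{(i,j)}$, twisted by the character $([1^2])_{\mathbf{e}_i+\mathbf{e}_j}\boxtimes([1^2])_{(1,1)}$ carried by $\overline{x_iy_j-x_jy_i}$.

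Next, I would invoke Proposition \ref{A} for $A^{(i,j)}$ with parameters $(a,b,c)$ replaced by $(a,b-2,c-1)$; the case split is preserved since $c\neq b/2$ iff $c-1\neq(b-2)/2$. Inducing up over the $\binom{b}{2}$ choices of $\{i,j\}$ inside the 1-block of size $b$ amounts to prepending $\Ind([1^2],-)$ to the Specht-module factor sitting over the 1-block, which is how the extra $[1^2]$ in the stated formula appears. The sign character of $\overline{x_iy_j-x_jy_i}$ under the $(i,j)$-transposition is precisely what forces this induction to come with $[1^2]$ rather than $[2]$.

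The only delicate point is the $\mf S_2$-action swapping $x\leftrightarrow y$. When $c\neq b/2$ the $\ZZ^2$-degrees $a+b-c$ and $a+c$ are unequal, so $\mf S_2$ acts freely and one simply gets $([1],[1])_{\langle\mathbf{b}\rangle}$, independent of the sign character. When $c=b/2$ the $[1^2]$ character of the generator under $x\leftrightarrow y$ tensors with the $([1^2])^{a+c-1-j}$ factor produced by Proposition \ref{A} for the shifted parameters, bumping the exponent up by one to $a+c-j$ and matching the stated formula. The second statement follows by the same method with $\overline{x_i^2y_j^2-x_j^2y_i^2}$ in place of $\overline{x_iy_j-x_jy_i}$, the shift being $2(\mathbf{e}_i+\mathbf{e}_j)\times(2,2)$, and the pair $\{i,j\}$ ranging over the 2-block of size $a$, so that inducing up contributes $\Ind([1^2],[a-2])$ on the Specht factor over the 2-block. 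The main obstacle throughout is this $\mf S_2$-bookkeeping in the $c=b/2$ case, which requires careful re-indexing after the parameter shift.
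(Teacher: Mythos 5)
Your proposal is correct and matches the paper's proof, which likewise identifies each summand $A^{(i,j)}\overline{x_iy_j-x_jy_i}$ (resp. $A^{(i,j)}\overline{x_i^2y_j^2-x_j^2y_i^2}$) as a degree-shifted, sign-twisted copy of the smaller exterior algebra, applies Proposition \ref{A} after the parameter shift, and induces up over the pair $\{i,j\}$; you simply spell out the bookkeeping (the shift $(a,b,c)\mapsto(a,b-2,c-1)$ or $(a-2,b,c)$, the preservation of the $c=b/2$ dichotomy, and the $[1^2]$-twist on both the pair-transposition and the $x\leftrightarrow y$ swap) that the paper compresses into one sentence.
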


Now $\widetilde{\psi} = \Ext(\psi,\CC)$ sends $\overline{x_i y_j - x_j y_i}$ to the image of $e_i f_j - e_j f_i$ in $\coker \widetilde \varphi^0 \cong (\mf m_{ef}^-)_2$. We need to show where $\widetilde \psi$ sends the other $A$-module generators, namely the $\overline{x_i^2 y_j^2 - x_j^2 y_i^2}$.

\begin{proposition}
\label{ext1}
Up to scaling, $\widetilde \psi(\overline{x_i^2 y_j^2 - x_j^2 y_i^2}) = e_i f_i \overline{x_j y_j} - e_j f_j \overline{x_i y_i} \in \ker \widetilde \varphi^2$.
\end{proposition}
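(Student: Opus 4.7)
The plan is to pin down $\widetilde\psi(\overline{x_i^2 y_j^2 - x_j^2 y_i^2})$ by a multidegree–dimension argument rather than an explicit chain-level calculation, reducing the claim to a non-vanishing check.

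First I would fix the single multidegree $\mathbf{d} = (2\mathbf{e}_i + 2\mathbf{e}_j, (2,2))$ (not its whole $\mf{S}_n \times \mf{S}_2$-orbit) and show that both $\Ext^1(C_2,\CC)_{\mathbf{d}}$ and $\Ext^1(D,\CC)_{\mathbf{d}}$ are one-dimensional. For the source, only the $(i,j)$-summand $\frac{S^{(i,j)}}{(x_i y_j + x_j y_i)}\overline{x_i y_j - x_j y_i}$ of $C_2$ can contribute in this multidegree (other summands are supported on the wrong variables), and under the splitting $A^{(i,j)}_p \overline{x_i y_j - x_j y_i} \oplus A^{(i,j)}_{p-1}\overline{x_i^2 y_j^2 - x_j^2 y_i^2}$ of Proposition \ref{det} the first term contributes zero (since $A^{(i,j)}$ kills $e_i,f_i,e_j,f_j$ and therefore has no $V$-degree on $i$ or $j$) while the second contributes exactly $\CC\cdot\overline{x_i^2 y_j^2 - x_j^2 y_i^2}$. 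For the target, $\Ext^1(D,\CC) \cong \coker\widetilde\varphi^1 \oplus \ker\widetilde\varphi^2$, and in multidegree $\mathbf{d}$ the cokernel piece vanishes because $\widetilde\varphi^1$ is surjective whenever $a \geq 1$ (as observed in the proof of Proposition \ref{D}); the kernel piece is the one-dimensional subspace of $\Ext^2(C_1,\CC)_{\mathbf d}$ spanned by $e_i f_i \overline{x_j y_j} - e_j f_j \overline{x_i y_i}$, the kernel generator explicitly recorded in that proof.

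Next I would argue that $\widetilde\psi^1$ is injective in multidegree $\mathbf{d}$. The long exact sequence from sequence (2) contains
\[
\Ext^0(P, \CC) \xrightarrow{\delta_2^0} \Ext^1(C_2, \CC) \xrightarrow{\widetilde\psi^1} \Ext^1(D, \CC),
\]
so $\ker\widetilde\psi^1 = \operatorname{im}\delta_2^0$. Because $\delta_2^0$ preserves the internal multigrading and $\Ext^0(P,\CC)$ is supported in the multidegrees of the minimal generators of $P$, namely the $\mf{S}_n \times \mf{S}_2$-orbit of $(\mathbf{e}_i + \mathbf{e}_j, (1,1))$, no component of $\operatorname{im}\delta_2^0$ lands in $(2\mathbf{e}_i + 2\mathbf{e}_j, (2,2))$. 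Thus $\widetilde\psi^1_{\mathbf{d}}$ has trivial kernel, and being an injection between one-dimensional spaces it is an isomorphism.

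Combining the two observations, $\widetilde\psi(\overline{x_i^2 y_j^2 - x_j^2 y_i^2})$ is a non-zero scalar multiple of the unique generator $e_i f_i \overline{x_j y_j} - e_j f_j \overline{x_i y_i}$ of $\ker\widetilde\varphi^2_{\mathbf{d}}$, which is exactly the claim up to scaling. The main obstacle is simply careful bookkeeping: extracting the correct one-dimensional pieces from the representation-theoretic statements of Propositions \ref{det} and \ref{D}, and verifying that neither the $\coker\widetilde\varphi^1$-summand of the target nor the image of the connecting map $\delta_2^0$ from $\Ext^0(P,\CC)$ interferes in this particular multidegree. Avoiding any explicit lift of $\psi$ to free resolutions is the payoff of this approach.
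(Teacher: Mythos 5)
Your proof is correct, and it takes a genuinely different route from the paper's for the crucial non-vanishing step. Both proofs begin by observing that $\Ext^1(C_2,\CC)_{\mathbf{d}}$ and $\Ext^1(D,\CC)_{\mathbf{d}}$ are each one-dimensional in multidegree $\mathbf{d} = (2\mathbf{e}_i + 2\mathbf{e}_j) \times (2,2)$, so that $\widetilde\psi_{\mathbf{d}}$ is either zero or carries $\overline{x_i^2 y_j^2 - x_j^2 y_i^2}$ to a scalar multiple of $e_i f_i \overline{x_j y_j} - e_j f_j \overline{x_i y_i}$. The difference lies in how the scalar is shown to be non-zero. The paper lifts $\psi$ to an explicit map of free resolutions --- the Taylor complex for the monomial ideal $D$ against the complete-intersection (Koszul-type) resolution of $S^{(i,j)}/(x_i y_j + x_j y_i)$ --- and checks that the induced first-step map sends $X_i Y_j \wedge X_j Y_i$ to the generator $c_{\mathbf{d}}$. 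You instead invoke the long exact sequence obtained from applying $\Ext(-,\CC)$ to sequence (2): since $\ker\widetilde\psi^1 = \operatorname{im}\bigl(\delta \colon \Ext^0(P,\CC) \to \Ext^1(C_2,\CC)\bigr)$, and $\delta$ is multidegree-preserving while $\Ext^0(P,\CC) \cong P/\mathfrak{m}P$ is supported only in the orbit of $(\mathbf{e}_i+\mathbf{e}_j)\times(1,1)$, the kernel vanishes in multidegree $\mathbf{d}$, so $\widetilde\psi^1_{\mathbf{d}}$ is an injection between one-dimensional spaces. Your approach buys a cleaner proof that never leaves the $\Ext$ formalism and sidesteps writing down resolutions and chain lifts; the paper's chain-level computation is more concrete and self-contained but requires setting up the Taylor complex and tracking signs. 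Both rely on the already-established structural facts about $\coker\widetilde\varphi$ and $\ker\widetilde\varphi$ from Proposition \ref{D} and the decomposition in Proposition \ref{det}, so neither is circular.
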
 

\begin{proof}

 Since $\widetilde \psi$ preserves multidegrees, the image of $\overline{x_i^2 y_j^2 - x_j^2 y_i^2}$ must be of degree $\mathbf{d} = (2\mathbf{e}_i + 2\mathbf{e}_j, (2,2))$. The only element in $\Ext(D,\CC)$ of this multidegree is $e_i f_i \overline{x_j y_j} - e_j f_j \overline{x_i y_i} \in \ker \widetilde \varphi$. Since $\mf S_n \times \mf S_2$ acts on these two elements identically (namely, both permuting $i$ and $j$ or swapping $e$'s with $f$'s and $x$'s with $y$'s both negate), the image of $\overline{x_i^2 y_j^2 - x_j^2 y_i^2}$ is either the proposed image or zero. 
\par
To show that $\widetilde{\psi}_{\mathbf{d}}$ is non-zero, we show explicitly that $\Ext^1(\psi,\CC)$ is non-zero in that multidegree. Consider $\frac{S^{(i,j)}}{(x_i y_j + x_j y_i)} \overline{x_i y_j - x_j y_i}$. It is a cyclic $S$-module with annihilator $(x_k,y_k | k \neq i,j) + (x_i y_j + x_j y_i)$, which is a complete intersection. The first step of its free resolution is then:
\[
 S b_{(\mathbf{e}_i + \mathbf{e}_j) \times (1,1)} \xleftarrow{\partial} S c_{\mathbf{d}}  \oplus \text{other multidegree terms}
\]
where $b_{(\mathbf{e}_i + \mathbf{e}_j)\times (1,1)}$ corresponds to $\overline{x_i y_j - x_j y_i}$ and $\partial (c_{\mathbf{d}}) = (x_i y_j + x_j y_i) b_{(\mathbf{e}_i + \mathbf{e}_j)\times (1,1)} $. Next, consider the first step of the minimal resolution of $D$. Since the generators of $D$ have pairwise distinct variables, the first step of the resolution is given by the Taylor complex:
\[
\bigoplus_{i \neq j} S X_i Y_j \xleftarrow{\partial'} 
\bigwedge^2 (\bigoplus_{i \neq j} S X_i Y_j )
\]
where $X_i Y_j$ corresponds to $x_i y_j$, and
\[
\partial' (X_i Y_j \wedge X_k Y_\ell) = \frac{\lcm(x_i y_j,x_k y_\ell)}{x_k y_\ell} X_k Y_\ell - \frac{\lcm(x_i y_j,x_k y_\ell)}{x_i y_j} X_i Y_j
\]
In particular, $\partial'(X_i Y_j \wedge X_j Y_i) = x_i y_j X_j Y_i - x_j y_i X_i Y_j$.
\par
The map $\psi: D \to D/P$ sending $x_i y_j$ to $\overline{x_i y_j - x_j y_i}$ induces the $S$-module map on the first homological degree of their projective resolutions $\oplus S X_i Y_j  \wedge X_k Y_\ell \to S c_{\mathbf{d}} \oplus \dots$ sending $X_i Y_j \wedge X_j Y_i$ to $c_{\mathbf{d}}$, so is non-zero in that multidegree. Thus, the induced map $\Ext^1(\psi,\CC)$ is also non-zero in multidegree $\mathbf{d}$. 
\end{proof}

\begin{theorem}
\label{P}
The following are isomorphisms of $G_n \times G_2$-representations:
\begin{itemize}
\item[(i)]
\[
\Ext^0(P,\CC)_{\langle (1^2,0^{n-2}) \times (1^2) \rangle } \cong ([2],[n-2])_{\langle 1^2,0^{n-2} \rangle} \boxtimes ([2])_{\langle 1^2 \rangle}
\]
    yielding graded Betti number
    \[
    \beta_{0,2} = \binom{n}{2}
    \]
\item[(ii)]
For $\mathbf{d}$ with $a=0$, $b = p+3$, $c \geq 1$, 
\[
\Ext^p(P,\CC)_{\langle \mathbf{d} \rangle} \cong (\oplus A^{(i,j)} \overline{x_i y_j - x_j y_i})_{\langle \mathbf{d} \rangle} - A^-_{\langle \mathbf{d} \rangle} \cong
\]
\[
\begin{cases}
(\Ind([1^2],[1^{b-1-c}],[1^{c-1}]) - \Ind([1^{b-c}], [1^{c}]))_{\langle \mathbf{a} \rangle} \boxtimes ([1],[1])_{\langle \mathbf{b} \rangle} \text{ if }c \neq \frac{b}{2} \\
\sum_{j = 0}^{c-1}(\Ind([1^2],[2^{c-1-j},1^{2j}]))_{\langle \mathbf{a} \rangle }  \boxtimes ([1^2]^{c-j})_{\langle \mathbf{b} \rangle} \\
- \sum_{j = 0}^c([2^{c-j},1^{2j}])_{\langle \mathbf{a} \rangle }  \boxtimes ([1^2]^{1+c-j})_{\langle \mathbf{b} \rangle} \text{ if } c = \frac{b}{2}
\end{cases}
\]
\item[(iii)]
For $\mathbf{d}$ with $2a + b = p+3, a \geq 1$, and $b \geq 2$,
\[
\Ext^p(P,\CC)_{\langle \mathbf{d} \rangle} \cong (\oplus A^{(i,j)} \overline{x_i y_j - x_j y_i})_{\langle \mathbf{d} \rangle} \cong
\]
\[
\begin{cases}
([a],\Ind([1^2],[1^{b-1-c}], [1^{c-1}]))_{\langle \mathbf{a} \rangle} \boxtimes ([1],[1])_{\langle \mathbf{b} \rangle} \text{ if }c \neq \frac{b}{2} \\
\sum_{j = 0}^{c-1}([a], \Ind([1^2],[2^{c-1-j},1^{2j}]))_{\langle \mathbf{a} \rangle }  \boxtimes ([1^2]^{  a+c-j})_{\langle \mathbf{a} \rangle} \text{ if } c = \frac{b}{2}
\end{cases}
\]
yielding graded Betti numbers
    \[
    \beta_{p,p+3} = \binom{n}{2} \binom{2n-4}{p+1} - \binom{n}{p+3}(2^{p+3} - 2)
    \]
\item[(iv)]
For $\mathbf{d}$ with $2a + b = p+4, a \geq 3$, and $b \geq 0$, 
\[
\Ext^p(P,\CC)_{\langle \mathbf{d} \rangle} \cong (\oplus A^{(i,j)}\overline{x_i^2 y_j^2 - x_j^2 y_i^2})_{\langle \mathbf{d} \rangle} - \ker \widetilde \varphi_{\langle \mathbf{d} \rangle} \cong
\]
\[
\begin{cases}
([a-2,1^2],\Ind([1^{b-c}], [1^{c}]))_{\langle \mathbf{a} \rangle} \boxtimes ([1],[1])_{\langle \mathbf{b} \rangle} \text{ if }c \neq \frac{b}{2} \\
\sum_{j = 0}^c([a-2,1^2], [2^{c-j},1^{2j}])_{\langle \mathbf{a} \rangle }  \boxtimes ([1^2]^{ a-1+c-j})_{\langle \mathbf{b} \rangle} \text{ if } c = \frac{b}{2}
\end{cases}
\]
yielding graded Betti numbers
    \[
    \beta_{p,p+4} = \binom{n}{2} \binom{2n-4}{p-1} - n \binom{2n-2}{p+1} + \binom{2n}{p+3} - \binom{n}{p+3} 2^{p+3}
    \]

\end{itemize}

For other $\mathbf{d}$, $\Ext(P,\CC)_{\langle \mathbf{d} \rangle} = 0$.

\end{theorem}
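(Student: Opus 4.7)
The plan is to apply $\Ext(-,\CC)$ to sequence (2) and analyze the resulting connecting map $\widetilde{\psi}^p\colon \Ext^p(C_2,\CC)\to\Ext^p(D,\CC)$ multidegree by multidegree, exploiting the splitting $\Ext^p(C_2,\CC) \cong \bigoplus_{i<j}\bigl(A^{(i,j)}_p\overline{x_iy_j-x_jy_i}\oplus A^{(i,j)}_{p-1}\overline{x_i^2y_j^2-x_j^2y_i^2}\bigr)$. Denote by $\widetilde{\psi}^{(1)}$ and $\widetilde{\psi}^{(2)}$ the restrictions of $\widetilde{\psi}$ to the two summands. By construction $\widetilde{\psi}^{(1)}$ sends $\overline{x_iy_j-x_jy_i}\mapsto e_if_j-e_jf_i$, which lies in the $\coker\widetilde{\varphi}\cong A^-$ part of $\Ext(D,\CC)$ from Proposition \ref{D}(i); by Proposition \ref{ext1}, $\widetilde{\psi}^{(2)}$ sends $\overline{x_i^2y_j^2-x_j^2y_i^2}\mapsto e_if_i\overline{x_jy_j}-e_jf_j\overline{x_iy_i}$, which lies in the $\ker\widetilde{\varphi}$ part from Proposition \ref{D}(ii). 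Since these two parts of $\Ext(D,\CC)$ sit in polynomial degrees $p+2$ and $p+3$ respectively, and the two summands of $\Ext(C_2,\CC)$ have matching polynomial degrees, the analyses decouple and the long exact sequence yields a split short exact sequence $0\to\coker\widetilde{\psi}^p_{\langle\mathbf{d}\rangle}\to\Ext^p(P,\CC)_{\langle\mathbf{d}\rangle}\to\ker\widetilde{\psi}^{p+1}_{\langle\mathbf{d}\rangle}\to 0$ in each multidegree, with summands landing in distinct polynomial degrees.

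The central point is that both $\widetilde{\psi}^{(1)}$ and $\widetilde{\psi}^{(2)}$ are $A$-linear. For $\widetilde{\psi}^{(2)}$ this immediately gives surjectivity onto $\ker\widetilde{\varphi}$, because the images of its generators are exactly the $A$-module generators of $\ker\widetilde{\varphi}$ identified in the proof of Proposition \ref{D}. Comparing the source representation (Proposition \ref{det}) with the target (Proposition \ref{D}(ii)) and applying Pieri's rule $\Ind([1^2],[a-2])=[a-1,1]+[a-2,1^2]$ for $a\geq 3$ (while $\Ind([1^2],[0])=[1^2]=[1,1]$ for $a=2$) then yields $\ker\widetilde{\psi}^{(2)}_{\langle\mathbf{d}\rangle} = ([a-2,1^2],\Ind([1^{b-c}],[1^c]))_{\langle 2^a,1^b,0^{n-a-b}\rangle}$ for $a\geq 3$ and zero for $a=2$. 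This gives case (iv) outright and confirms that $\widetilde{\psi}^{(2)}$ contributes nothing to the other cases.

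For $\widetilde{\psi}^{(1)}$, the target $A^-$ is supported only in multidegrees $(1^k,0^{n-k})\times\mathbf{b}$, and I claim $\widetilde{\psi}^{(1)}$ is surjective there for $k\geq 3$ while for $k=2$ it is injective with one-dimensional cokernel per orbit. The $k=2$ case is direct from Propositions \ref{mef} and \ref{det}: the image is the $[1^2]\boxtimes[1^2]$ summand, and after the $\mf S_2$-sign twist built into $\mf m_{ef}^-$ the complementary piece is $([2],[n-2])_{\langle 1^2,0^{n-2}\rangle}\boxtimes([2])_{\langle 1^2\rangle}$, giving case (i). Surjectivity for $k\geq 3$ I prove by induction on $k$: the base case $k=3$ is the explicit formula $\widetilde{\psi}\bigl(\tfrac{1}{2}(e_i\overline{x_jy_k-x_ky_j}-e_j\overline{x_iy_k-x_ky_i}-e_k\overline{x_iy_j-x_jy_i})\bigr)=e_ie_jf_k$ from the proof outline (and its $e\leftrightarrow f$ analogue). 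For the inductive step, given a basis element $e_If_J\in A^-$ in the $(1^{k+1})$-piece with $|I|,|J|\geq 1$, I pick $m\in I$ when $|I|\geq 2$ (otherwise $m\in J$, using $f_m$ instead) and write $e_If_J=\pm e_m\cdot e_{I\setminus\{m\}}f_J$; the factor lies in the $(1^k)$-piece with both parts nonzero, so has a preimage $\xi$ by the inductive hypothesis, and $\pm e_m\xi$ is a preimage of $e_If_J$ by $A$-linearity.

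With $\widetilde{\psi}$ understood, the remaining cases are assembled. Case (ii) is $\ker\widetilde{\psi}^{(1)}_{p+1}$ in multidegree $(1^{p+3},0^{n-p-3})\times\mathbf{b}$: surjectivity forces $\ker = \mathrm{source}-\mathrm{target}$, giving the difference of induced representations in the statement. Case (iii) arises because $\Ext^{p+1}(D,\CC)$ vanishes in multidegrees $(2^a,1^b,0^{n-a-b})\times\mathbf{b}$ with $a\geq 1$ and $b\geq 2$ (Proposition \ref{D}(i) requires $a=0$), so the entire source 1 representation from Proposition \ref{det} survives as $\ker\widetilde{\psi}^{(1)}_{p+1}$. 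The Betti number formulas then follow by dimension counting using $\dim(\rho_1,\ldots,\rho_\ell)_{\langle\mathbf{a}\rangle} = \binom{n}{p_1,\ldots,p_\ell}\prod_i\dim\rho_i$ and standard binomial identities, and the vanishing of $\Ext(P,\CC)_{\langle\mathbf{d}\rangle}$ outside the listed multidegrees is automatic since $\Ext(C_2,\CC)$ and $\Ext(D,\CC)$ both vanish there. The $c=b/2$ symmetric cases are handled identically using the $\sum_j$ formulas in Propositions \ref{A} and \ref{det}. The main technical obstacle is the inductive surjectivity of $\widetilde{\psi}^{(1)}$, which leans crucially on $A$-linearity to propagate the $k=3$ base case upward.
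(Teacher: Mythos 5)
Your proposal is correct and follows essentially the same route as the paper: split $\widetilde\psi$ into the components $\widetilde\psi^{(1)}=\alpha$ (targeting $\coker\widetilde\varphi$) and $\widetilde\psi^{(2)}=\beta$ (targeting $\ker\widetilde\varphi$), show $\beta$ is surjective via the generators of $\ker\widetilde\varphi$ and $\alpha$ is injective in degree $2$ and surjective in higher degree, then read off kernels/cokernels as representation differences. The one place you add genuine value is the explicit induction that lifts surjectivity of $\alpha$ from the base case $(1^3,0^{n-3})$ to all $(1^k,0^{n-k})$, $k\ge 3$, using $A$-linearity -- the paper simply asserts ``it suffices to show $\alpha^1$ is surjective'' and leaves that propagation to the reader.
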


\begin{proof}

Applying $\Ext(-,\CC)$ to (2) gives long exact sequence
\[   
 \dots \to \substack{\oplus A^{(i,j)}_p \overline{x_i y_j - x_j y_i} \\ \oplus A^{(i,j)}_{p - 1}\overline{x_i^2 y_j^2 - x_j^2 y_i^2} }  \xrightarrow{\widetilde \psi^p} \substack{ \coker \widetilde \varphi^{p}   \\ \oplus \ker \widetilde \varphi^{p+1} } \to \Ext^p(P,\CC) \to \dots 
\]
By our description of $\widetilde \psi$, it can be decomposed into 
\[
\alpha^p: \oplus A_p^{(i,j)} \overline{x_i y_j - x_j y_i} \to \coker \widetilde \varphi^p
\]
\[
\beta^p: \oplus A^{(i,j)}_{p-1}\overline{x_i^2 y_j^2 - x_j^2 y_i^2} \to  \ker \widetilde \varphi^{p+1}
\]
The map $\alpha^0$ is injective; $\overline{x_1y_2 - x_2 y_1}$ has representation 
$([1^2])_{(1^2)} \boxtimes ([1^2])_{(1^2)}$
and removing this representation from $(\coker \widetilde \varphi)_{((1^2,0^{n-2}),(1^2))} \cong A^-_{(1^2)\times (1^2)}$ leaves the representation appearing in (i).
\par
I claim that $\alpha^p$ is surjective for $p > 0$. It suffices to show that $\alpha^1$ is surjective, for which it suffices to show that any element $e_i e_j f_k$ of $A^-$ for $i < j < k$ is in the image of $\alpha^1$. Indeed,
\[
\alpha( \frac{1}{2} e_i \overline{x_j y_k - x_k y_j} - \frac{1}{2} e_j \overline{x_i y_k - x_k y_i} - \frac{1}{2} e_k \overline{x_i y_j - x_j y_i}) = e_i e_j f_k
\]
So, for $\mathbf{d}$ as in (ii), the source of $\alpha_{\mathbf{d}}$ is $(\oplus A^{(i,j)} \overline{x_i y_j - x_j y_i})_{\mathbf{d}}$ and its target is isomorphic to $A^-_{\mathbf{d}}$, so the representation of $\ker \alpha_{\mathbf{d}}$ is the difference between their representations. This leaves the representations appearing in (ii) (when $a = 0$ and $b = 3$, the two representations are identical, so the kernel is 0). 
\par
When $a \geq 1$, the target of $\alpha_{\mathbf{d}}$ is zero, so the kernel's representation is $(\oplus A^{(i,j)} \overline{x_i y_j - x_j y_i})_{\mathbf{d}}$, which gives the representations of (iii). To obtain the graded Betti numbers, note that we include the entire degree $p+3$ component of $\oplus A^{(i,j)} \overline{x_i y_j - x_j y_i}$, and we exclude only the squarefree monomials of $A$ of degree $p+3$ with at least one $e$ or $f$. This is all of the contributions of $\alpha$ to $\Ext(P,\CC)$. 
\par
The map $\beta$ is surjective, as it maps onto the generators of $\ker \widetilde \varphi$. So, its representation in degree $\mathbf{d}$ is the difference between that of $(\oplus A^{(i,j)} \overline{x_i^2 y_j^2 - x_j^2 y_i^2})_\mathbf{d}$ and $\ker \widetilde \varphi_{\mathbf{d}}$. Since $\Ind([1^2],[a-2]) = [a-2,1^2] + [a-1,1]$, the difference of representations in the multidegree 2 part of the $\mathbb{Z}^n$ portion is $[a-2,1^2]$ if $a \geq 3$ and 0 if $a = 2$. This gives the representations appearing in $(iv)$. To obtain the graded Betti numbers, note that we include the entire degree $p+3$ component of $\oplus A^{(i,j)} \overline{x_i^2 y_j^2 - x_j^2 y_i^2}$, and exclude the entire $p+3$ component of $\ker \tilde \varphi$, which has the same formula as the Betti numbers in part (ii) of Proposition \ref{D}.
\par
Finally, $\alpha$ and $\beta$ are maps on separate linear components, and their kernels and cokernels lie in distinct multidegrees. So, $\Ext(P,k)$ is the direct sum of the various kernels and cokernels of $\alpha$ and $\beta$, and we are done.

\end{proof}

\bibliography{mybibliography}

\end{document}